\renewcommand{\epsilon}{\varepsilon}
\renewcommand{\setminus}{\smallsetminus}
\newtheorem{theorem}{Theorem}[section]
\newtheorem{proposition}[theorem]{Proposition}
\newtheorem{corollary}[theorem]{Corollary}
\newtheorem{lemma}[theorem]{Lemma}
\newtheorem*{cantorstheorem}{Cantor's Theorem}
\newtheorem*{theorema}{Theorem A}
\newtheorem*{theoremb}{Theorem B}
\newtheorem*{corollaryb1}{Corollary B1}
\newtheorem*{corollaryb2}{Corollary B2}
\newtheorem*{corollaryb3}{Corollary B3}
\newtheorem*{claim1}{Claim 1}
\newtheorem*{claim2}{Claim 2}
\newtheorem*{claim3}{Claim 3}
\newtheorem*{claim4}{Claim 4}
\newtheorem{example}[theorem]{Example}
\theoremstyle{definition}
\newtheorem*{definition*}{Definition}
\theoremstyle{remark}
\newtheorem{remark}[theorem]{Remark}
\newtheorem{remarks}[theorem]{Remarks}
\newcommand{\normal}{\lhd}
\newcommand{\Q}{\mathbb Q}
\newcommand{\Z}{\mathbb Z}
\newcommand{\N}{\mathbb N}
\newcommand{\C}{\mathbb C}
\newcommand{\F}{\mathbb F}
\newcommand{\mono}{\rightarrowtail}
\newcommand{\epi}{\twoheadrightarrow}
\renewcommand{\implies}{\Rightarrow}
\newcommand{\GL}{\operatorname{GL}}
\newcommand{\Maltsev}{{Mal${}'$\!cev}}
\newcommand{\dev}{\mathrm{dev}}
\newcommand{\krull}{\mathrm{Krull}}
\DeclareMathOperator{\toprk}{toprk}
\definecolor{amber}{rgb}{1.0, 0.75, 0.0}
\definecolor{violet}{rgb}{0.6, 0, 1.0}
\title{Soluble groups with no $\Z\wr\Z$ sections}
\begin{document}

\author[Jacoboni]{Lison Jacoboni}
\author[Kropholler]{Peter Kropholler}

\address{Lison Jacoboni, Laboratoire de Math\'ematiques d'Orsay, Univ. Paris-Sud, CNRS, Universit\'e Paris-Saclay, 91405 Orsay, France.}
\email{lison.jacoboni@math.u-psud.fr}
\address{Peter Kropholler, Mathematical Sciences, University of Southampton,  UK}
\email{p.h.kropholler@soton.ac.uk}


\subjclass[2010]{20J05, 20E22 }
\keywords{
wreath products, Krull dimension, soluble groups, torsion-free rank, 
\emph{
produits en couronne, dimension de Krull, groupes r\'esolubles, rang sans torsion}}

\thanks{L.~J. was supported by ANR-14-CE25-0004 GAMME}
\thanks{P.~K. was supported in part by  EPSRC grants no EP/K032208/1 and EP/ N007328/1.}
\thanks{The authors benefitted from a collaboration supported by the Isaac Newton Insitute}

\begin{abstract} In this article, we examine how the structure of soluble groups of infinite torsion-free rank with no section isomorphic to the wreath product of two infinite cyclic groups can be analysed. 
As a corollary, we obtain that if a finitely generated soluble group has a defined Krull dimension and has no sections isomorphic to the wreath product of two infinite cyclic groups then it is a group of finite torsion-free rank. There are further corollaries including applications to return probabilities for random walks. The paper concludes with constructions of examples that can be compared with recent constructions of Brieussel and Zheng.
\vspace{20pt}

\centerline{\it Groupes r\'esolubles sans section isomorphe \`a $\Z\wr\Z$}
\vspace{5pt}
\centerline{\bf R\'esum\'e}
\vspace{5pt}

{\it Dans cet article, nous donnons un th\'eor\`eme de structure pour les groupes r\'esolubles de rang sans torsion infini et sans section isomorphe au produit en couronne de deux groupes cycliques infinis. 
En cons\'equence, si un groupe r\'esoluble de type fini sans telle section admet une dimension de Krull alors il est de rang sans torsion fini. D'autres corollaires sont \'egalement d\'eduits, en particulier une application aux probabilit\'es de retour des marches al\'eatoires. L'article se termine avec la construction d'exemples qui peuvent \^etre compar\'es avec des travaux r\'ecents de Brieussel et Zheng.
}
\end{abstract}

\maketitle

\section{Introduction}

This paper examines the structure of finitely generated soluble groups of infinite torsion-free rank which have no sections isomorphic to $\Z\wr\Z$. The existence of such groups was established in \cite{K2}.  
Here we prove two theorems. Theorem A applies to any finitely generated soluble group of infinite torsion-free rank and shows in particular that there is a quotient $G$ of such a group that has an abelian Fitting subgroup $F$ with infinite torsion-free rank such that $G/F$ has finite torsion-free rank. These conclusions are drawn in Section 2 where we also establish some results on Krull dimensions of soluble and nilpotent groups. Theorem A also asserts that when the group has no $\Z\wr\Z$ sections then $G$ can be chosen to be residually finite. To prove this last part we require a more detailed structural result, Theorem B which is stated and proved in Section 3. Theorem B has a number of further  corollaries. For example, Corollary B1 asserts that finitely generated soluble groups with Krull dimension and no $\Z\wr\Z$ sections have finite torsion-free rank. We offer an interesting application to return probabilities for random walks on Cayley graphs for finitely generated soluble linear groups (see Corollary B3 below). In the concluding Section 4 of the paper we extend ideas of \cite{K2} to construct examples of groups with no $\Z\wr\Z$ sections and generalizations of lamplighter groups. We observe there is a very closed connection with recent construction of Brieussel and Zheng, see \cite{Brieussel,BZ}. In particular, both constructions produce groups that are abelian-by-(locally finite)-by-cyclic.

\section*{Acknowledgements}

The first author wishes to thank Yves de Cornulier and Romain Tessera for useful discussions.

\section{Notation, Background, and Theorem A}

\subsection{Classes of soluble groups} The terms of the derived series of a group $G$ are denoted $G^{(n)}$, inductively defined with $G^{(0)}=G$ and $G^{(n+1)}$ being the commutator subgroup $[G^{(n)},G^{(n)}]$. 
The soluble groups are those for which some term of the derived series is trivial and the derived length is the length of the derived series.
Recall that a group $G$ is \emph{soluble and minimax} provided it has a series
$\{1\}=G_0\triangleleft G_1\triangleleft\dots\triangleleft G_n=G$
in which the factors are cyclic or quasicyclic. By a \emph{quasicyclic} group, we mean a group $C_{p^\infty}$, where $p$ is a prime number, isomorphic to the group of $p$-power roots of unity in the field $\C$ of complex numbers. For a useful alternative point of view, the exponential map $z\mapsto e^{2\pi iz}$ identifies the additive group $\Z[\frac1p]/\Z$ with $C_{p^\infty}$. The terminology \emph{Pr\"ufer $p$-group} is often used to mean the quasicyclic group $C_{p^\infty}$.
For brevity, we write $\mathfrak M$ for the class of soluble minimax groups. 

For soluble groups the \emph{Hirsch length} or \emph{torsion-free rank} can be defined in terms of the derived series $(G^{(i)})$ by the formula $h(G)=\sum_{i\ge0}\dim_\Q G^{(i)}/G^{(i+1)}\otimes_\Z\Q$. Soluble groups of finite torsion-free rank possess a locally finite normal subgroup such that the quotient belongs to $\mathfrak M$. We write $\tau(G)$ for the largest normal locally finite subgroup of $G$.

Let $\mathfrak X$ denote the class of soluble groups of finite torsion-free rank, and let $\mathfrak X_q$ be the subclass of those having torsion-free rank $q$. Let $\mathfrak A_0$ denote the class of torsion-free abelian groups. For classes of groups $\mathfrak Y$ and $\mathfrak Z$, we write $\mathfrak{YZ}$ for the class of groups that have a normal $\mathfrak Y$-subgroup with corresponding quotient in $\mathfrak Z$. If $G$ is a finitely generated soluble group not in $\mathfrak X$ then it is a triviality to choose a quotient of $G$ that belongs to 
$\mathfrak A_0\mathfrak X$ but not $\mathfrak X$. For example, let $d$ be least such that $G^{(d-1)}/G^{(d)}$ has infinite torsion free rank and let $H/G^{(d)}$ be the torsion subgroup of  $G^{(d-1)}/G^{(d)}$. Then $G/H$ belongs to $\mathfrak A_0\mathfrak X\setminus\mathfrak X$. 
While it is not in general possible to choose a quotient that is \emph{just-non-$\mathfrak X$} we can nevertheless find quotients enjoying certain key properties. We write $\mathrm{Fitt}(G)$ for the join of the normal nilpotent subgroups of $G$. This is the Fitting subgroup. Fitting's lemma states that the join of two nilpotent normal subgroups is nilpotent and consequently the Fitting subgroup is locally nilpotent. It is the directed union of the nilpotent normal subgroups and it contains every subnormal nilpotent subgroup.

\begin{theorema}
Every finitely generated soluble group of infinite torsion-free rank  has a quotient $G$ with the following properties:
\begin{enumerate}
\item The Fitting subgroup $F$ of $G$ is torsion-free abelian of infinite rank, self-centralizing, and every non-trivial normal subgroup of $G$ meets $F$.
\item The factor group $G/F$ has finite torsion-free rank.
\item $\tau(G)$ is trivial.
\item If $K$ is a normal subgroup of $G$ then either $K$ is abelian-by-torsion or $G/K$ has finite torsion-free rank.
\end{enumerate}
Moreover, if the original group has no $\Z\wr\Z$ sections then 
every such $G$ is residually finite.
\end{theorema}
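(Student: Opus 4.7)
Paragraph 1: The plan is to build $G$ as a suitable quotient of the given group $H$ through a Zorn-style maximality argument. I would first apply the reduction noted in the excerpt to replace $H$ by a quotient $H_0 \in \mathfrak{A}_0 \mathfrak{X} \setminus \mathfrak{X}$, so that $H_0$ contains a torsion-free abelian normal subgroup $A$ of infinite rank with $H_0/A \in \mathfrak{X}$. Quotienting by $\tau(H_0)$ preserves $A$ (since $A$ is torsion-free and meets $\tau$ trivially) and secures (iii).

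Paragraph 2: The core step is to choose a normal subgroup $N$ of $H_0$ maximal subject to two conditions: $N$ is abelian-by-torsion, and $H_0/N$ still has infinite torsion-free rank; then set $G := H_0/N$. The main obstacle is the applicability of Zorn's lemma, since ascending chains of normal subgroups with infinite-rank quotient can a priori collapse to finite rank at their union. I would attack this by exploiting the $\Z[H_0/A]$-module structure on $A$, together with the additivity of Hirsch length in extensions of soluble groups, so that along a suitably restricted class of chains the infinite-rank property passes to limits.

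Paragraph 3: Once $N$ is fixed, let $F \le G$ denote the image of $A$ in $G$ (after a further reduction by the torsion subgroup of the image if necessary, which is a characteristic subgroup of the abelian normal subgroup $AN/N$). Then $F$ is torsion-free abelian of infinite rank and $G/F$ is a quotient of $G/A \in \mathfrak{X}$, giving (ii). The maximality of $N$ forces $F = \fit(G)$: any strictly larger normal nilpotent subgroup would produce a strictly larger abelian-by-torsion normal subgroup of $H_0$ whose quotient still has infinite rank, contradicting maximality. The same logic shows $C_G(F) = F$, and then every nontrivial normal subgroup $K$ meets $F$ because $K \cap F = 1$ forces $[K,F] \le K \cap F = 1$, hence $K \le C_G(F) = F$ and so $K = K \cap F = 1$. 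Property (iv) is a direct restatement of maximality: if $K \triangleleft G$ with $G/K$ of infinite torsion-free rank, then the preimage of $K$ in $H_0$ has an infinite-rank quotient, so by maximality that preimage, and hence $K$, must be abelian-by-torsion.

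Paragraph 4: For the residual finiteness clause under the no-$\Z\wr\Z$-section hypothesis, I would invoke Theorem B of Section 3. The constructed quotient $G$ inherits the absence of $\Z\wr\Z$ sections, so Theorem B applies and yields a sufficiently detailed structural description of $F$ as a $\Z[G/F]$-module, and of $G/F$ itself as a well-behaved finite-rank soluble group, from which residual finiteness of $G$ can be read off by separating points of $F$ using finite-index $G$-invariant submodules and combining with residual finiteness of $G/F$. The difficult point of the whole argument is the chain-closure step of Paragraph 2; the rest is standard manipulation with the Fitting subgroup and Hirsch length.
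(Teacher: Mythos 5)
Your approach diverges from the paper's and has several genuine gaps beyond the Zorn obstacle you yourself flag. The paper sidesteps Zorn entirely: it chooses a quotient $G$ lying in $\mathfrak A_0\mathfrak X_q$ with infinite torsion-free rank and with $q$ \emph{minimal}; since $q$ is a nonnegative integer, the minimum exists trivially. Fixing an abelian normal $A$ with $G/A\in\mathfrak X_q$ and with $\tau(G)=1$, the argument then shows that for any nilpotent normal $N\trianglelefteq G$, the subgroup $K=NA$ is nilpotent of infinite rank, hence $K/[K,K]$ has infinite rank (Corollary~2.3), so minimality of $q$ forces $h(K/A)=0$, i.e.\ $K/A$ torsion; then $K$ is torsion-free with an abelian normal subgroup of torsion index, so Lemma~2.1(iv) makes $K$ abelian, whence $N$ is abelian. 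That is the engine of the proof, and it is absent from your write-up.

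Beyond the chain-closure issue in Paragraph~2 (note also that it is not clear the class of abelian-by-torsion groups is closed under ascending unions, so even the ``$N$ abelian-by-torsion'' part of your Zorn poset may fail at limits), Paragraph~3 contains two errors. First, your claim that a nilpotent normal $L\gneq F$ would contradict maximality of $N$ fails: since $F\leq L$, the quotient $G/L$ is a quotient of $G/F$, hence has \emph{finite} torsion-free rank, so the preimage of $L$ in $H_0$ does \emph{not} have an infinite-rank quotient and produces no violation of your maximality hypothesis. Second, your argument for (iv) is not a restatement of your maximality. You chose $N$ maximal among \emph{abelian-by-torsion} normal subgroups with infinite-rank quotient; but to prove (iv) you need to show an arbitrary $K\trianglelefteq G$ with $G/K$ of infinite rank \emph{is} abelian-by-torsion, and maximality in a smaller poset gives no information about such $K$. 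By contrast, the paper's minimality of $q$ handles (iv) cleanly: for such $K$, the quotient $G/K$ has an abelian normal image of $A$ with $(G/K)/(AK/K)\in\mathfrak X_{q'}$ for some $q'\le q$, and minimality forces $q'=q$, so $h(AK/A)=0$ and $K/(A\cap K)\cong AK/A$ is torsion with $A\cap K$ abelian. Your Paragraph~4 is close in spirit to the paper's use of Theorem~B, though the paper's actual residual-finiteness argument is sharper: it shows the finite residual $R$ intersects each $A_j$ trivially, hence $R\cap A=1$, hence $R$ is torsion, hence $R=1$ since $\tau(G)=1$.
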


Part (i) should be compared with the standard fact that the Fitting subgroup of a just-infinite or just-non-polycyclic group is abelian. See \cite{RW} for further information. The last part of Theorem A concerning groups with no $\Z\wr\Z$ sections requires a further result, Theorem B, which is proved in Section 3. 

To prove the first part of Theorem A including items (i)---(iv) we need the following facts about nilpotent groups. Here and subsequently, $\zeta(K)$ denote the centre of the group $K$. We write $\gamma_i(K)$ for the $i$th term of the lower central series of $K$, that is $\gamma_1(K)=K$ and inductively $\gamma_{i+1}(K)=[\gamma_i(K),K]$. For nilpotent groups the lower series terminates in $1$ by definition and the \emph{class} of a nilpotent group is its length.

\begin{lemma}\label{lem2.1}
Let $K$ be a nilpotent group.
\begin{enumerate}
\item 
The set of elements of finite order in $K$ is a subgroup.
\item
If $K$ is torsion-free then so is $K/\zeta(K)$.
\item
If $K/\zeta(K)$ is torsion then $[K,K]$ is torsion.
\item
If $K$ is torsion-free and possesses an abelian normal subgroup $A$ such that $K/A$ is a torsion group then $K$ is abelian.
\end{enumerate}
\end{lemma}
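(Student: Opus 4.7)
The plan is to establish the four parts in order, since (iii) will follow cleanly from (i) and (ii), and (iv) reduces to (iii) once the interaction between conjugation and nilpotency is understood. All four statements are classical, so the task is mostly to assemble the right ingredients in the right order.

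For (i), I would induct on the nilpotency class, reducing the problem to the assertion that a finitely generated nilpotent group generated by torsion elements is finite: indeed, given two torsion elements $x,y$ in $K$, the subgroup $\langle x,y\rangle$ is finitely generated nilpotent, its abelianization is a finitely generated abelian group generated by torsion (hence finite), and its derived subgroup has strictly smaller class and is therefore handled by the inductive hypothesis; so $xy^{-1}$ has finite order. For (ii), which I expect to be the main obstacle, the cleanest route is again induction on the class, exploiting the standard fact (via Hall--Petresco commutator identities, or equivalently via passage to the Mal$'$cev completion $K\otimes_\Z\Q$) that in a torsion-free nilpotent group each lower central quotient $\gamma_i(K)/\gamma_{i+1}(K)$ is torsion-free; this is enough to propagate torsion-freeness from $K$ to $K/\zeta(K)$ by chasing an element $x$ with $x^n\in\zeta(K)$ through the commutator identities $[x^n,y]\equiv[x,y]^n\bmod\gamma_3$, and so on inductively.

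For (iii), I would use (i) to define $T$ as the torsion subgroup of $K$, and then apply (ii) to the torsion-free nilpotent quotient $K/T$: since $\zeta(K/T)$ contains the image of $\zeta(K)$, the group $(K/T)/\zeta(K/T)$ is simultaneously a quotient of the torsion group $K/\zeta(K)$ and torsion-free by (ii), hence trivial; thus $K/T$ is abelian and $[K,K]\leq T$. For (iv), given $k\in K$ with $k^n\in A$, the conjugation action $\alpha$ of $k$ on the torsion-free abelian group $A$ satisfies $\alpha^n=\mathrm{id}_A$ (because $A$ is abelian and $k^n\in A$), while the nilpotence of $K$ forces $(\alpha-\mathrm{id}_A)^c=0$ on $A$, where $c$ is the class of $K$; passing to $A\otimes_\Z\Q$, the greatest common divisor of $X^n-1$ and $(X-1)^c$ in $\Q[X]$ is $X-1$, so $\alpha=\mathrm{id}_A$ on $A\otimes\Q$ and therefore on the torsion-free group $A$. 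Hence $A\leq\zeta(K)$, so $K/\zeta(K)$ is a quotient of the torsion group $K/A$ and (iii) gives that $[K,K]$ is torsion; since $K$ is torsion-free this forces $[K,K]=1$, so $K$ is abelian.
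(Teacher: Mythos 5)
Your argument for part (iv) is correct, but it follows a genuinely different path from the paper's. The paper first quotients by $\zeta(K)$ and inducts on the class to reduce to the case where $K/\zeta(K)$ is abelian; in that case, for fixed $a\in A$, the map $x\mapsto[x,a]$ is a homomorphism $K\to\zeta(K)$, and since each $x$ has a power $x^m\in A$ (which commutes with $a$), torsion-freeness forces $[x,a]=1$ for all $x$, giving $A\le\zeta(K)$. You instead fix $k\in K$ with $k^n\in A$ and observe that conjugation $\alpha$ by $k$ on $A$ satisfies both $\alpha^n=\mathrm{id}$ (since $k^n\in A$ is central in $A$) and $(\alpha-\mathrm{id})^c=0$ (from nilpotency of class $c$); passing to $A\otimes\Q$ and using $\gcd(X^n-1,(X-1)^c)=X-1$ in $\Q[X]$, you get $\alpha=\mathrm{id}$, hence again $A\le\zeta(K)$. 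Both arguments then finish identically via (iii) and torsion-freeness. Your route bypasses the inductive reduction and the homomorphism lemma, at the price of the linear-algebraic gcd trick over $\Q$; the paper's route is more elementary group theory but needs the reduction step.

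One small wrinkle in your sketch of (i): you say that the derived subgroup of $H=\langle x,y\rangle$ "has strictly smaller class and is therefore handled by the inductive hypothesis," but your inductive hypothesis concerns finitely generated nilpotent groups \emph{generated by torsion elements}, and it is not immediate that $[H,H]$ has that property. The clean statement to invoke is that a finitely generated nilpotent group with finite abelianization is finite (this is exactly the paper's Lemma~\ref{lem2.2}, i.e.\ Robinson 5.2.5--5.2.6, applied with $\mathfrak Z$ the class of finite groups), which makes no reference to torsion generators. Since the paper simply cites (i)--(iii) to Robinson, this is a side remark; the substantive comparison is the (iv) argument above.
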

\begin{proof}
Parts (i), (ii) and (iii) are standard results and we refer the reader to \cite{Rob96} for these and further background. For readers' convenience we include an argument to prove (iv).
Since $K$ is torsion-free, so is $K/\zeta(K)$ by (ii). By induction on class we may assume the result true of $K/\zeta(K)$ so we reduce at once to the case when $K/\zeta(K)$ is abelian. In this case, for any $a\in A$, the map $x\mapsto[x,a]$ is a homomorphism from $K$ to $\zeta(K)$. For any $x$ there is an $m\ge1$ with $x^m\in A$ and the homomorphism evaluates to $1$ this power of $x$. Since $K$ is torsion-free it follows that the homomorphism is trivial and hence $A$ lies in the centre of $K$, and $K/\zeta(K)$ (being a quotient of $K/A$) is torsion. Now (iii) implies that $[K,K]$ is torsion and since $K$ is torsion-free the result follows.
\end{proof}

The following further result about arbitrary groups is extremely important in analysing the structure of nilpotent groups.

\begin{lemma}\label{lem2.2}
Let $K$ be a group. Then for each $i$ there is a natural surjective homomorphism 
$$\underbrace{K/[K,K]\otimes\dots\otimes K/[K,K]}_i\to
\gamma_i(K)/\gamma_{i+1}(K).$$

In particular, if $K$ is nilpotent and $\mathfrak Z$ is an extension and quotient closed class of groups such that tensor products of abelian $\mathfrak Z$-groups are in $\mathfrak Z$ then $K$ belongs to $\mathfrak Z$ if and only if $K/[K,K]$ belongs to $\mathfrak Z$.
\end{lemma}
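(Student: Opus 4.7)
\emph{First assertion.} My plan is to construct the surjection explicitly using left-normed iterated commutators. For each $i$, I would define a map $K^i \to \gamma_i(K)/\gamma_{i+1}(K)$ by $(x_1,\ldots,x_i) \mapsto [[\ldots[x_1,x_2],x_3],\ldots,x_i] \cdot \gamma_{i+1}(K)$. Using the commutator identity $[xy,z] = [x,z]^y [y,z]$ together with the standard containment $[\gamma_a(K),\gamma_b(K)] \subseteq \gamma_{a+b}(K)$, an induction on $i$ shows that this map is multilinear in each slot modulo $\gamma_{i+1}(K)$. Since $\gamma_i(K)/\gamma_{i+1}(K)$ is an abelian group (being central in $K/\gamma_{i+1}(K)$), the map therefore factors through $K^{\otimes i}$ as a homomorphism of abelian groups. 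The same containment shows that whenever some $x_j$ lies in $[K,K] = \gamma_2(K)$ the resulting iterated commutator already sits in $\gamma_{i+1}(K)$, so the homomorphism descends to $(K/[K,K])^{\otimes i} \to \gamma_i(K)/\gamma_{i+1}(K)$. It is surjective because $\gamma_i(K)$ is generated as an abstract group by left-normed commutators of length $i$, and it is natural in $K$ by construction.

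\emph{Second assertion.} The forward direction is immediate since $K/[K,K]$ is a quotient of $K$ and $\mathfrak Z$ is quotient closed. For the converse, assume $K$ is nilpotent of class $c$ with $K/[K,K] \in \mathfrak Z$. Iterating the tensor-product hypothesis gives $(K/[K,K])^{\otimes i} \in \mathfrak Z$ for every $i \ge 1$, and quotient closure combined with the first assertion then places each factor $\gamma_i(K)/\gamma_{i+1}(K)$ in $\mathfrak Z$ for $1 \le i \le c$. Finally, $K$ is assembled from these factors by $c$ successive extensions along the lower central series, so extension closure puts $K$ itself in $\mathfrak Z$.

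\emph{Main obstacle.} The only delicate point I anticipate is the verification that the iterated commutator is genuinely multilinear modulo $\gamma_{i+1}(K)$. The commutator identities produce correction terms of the form $[a,b]^y$ and $[u,v]$ in which $a,b,u,v$ are themselves commutators, and one must track their lower-central-series weights carefully to confirm that after multiplying out in all $i$ slots the cumulative errors still fall inside $\gamma_{i+1}(K)$ rather than in a shallower term of the series. This bookkeeping is routine but tedious; every other step in the proof is essentially formal.
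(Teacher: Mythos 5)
Your argument is correct, and it is the standard proof. The paper itself gives no argument for this lemma, delegating it to Robinson's book (\cite[5.2.5 and 5.2.6]{Rob96}); the proof there is precisely the one you outline: the left-normed iterated commutator $(x_1,\dots,x_i)\mapsto[x_1,\dots,x_i]\gamma_{i+1}(K)$ is checked to be multilinear modulo $\gamma_{i+1}(K)$ via $[xy,z]=[x,z]^y[y,z]$ and the containments $[\gamma_a,\gamma_b]\subseteq\gamma_{a+b}$, it kills $[K,K]$ in each slot, it is onto because $\gamma_i/\gamma_{i+1}$ is generated by images of simple commutators of weight $i$, and the second assertion then follows formally from quotient and extension closure of $\mathfrak Z$ along the lower central series. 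Your own caveat is the right one to flag — the weight bookkeeping in the multilinearity check is the only nontrivial verification — but it does go through, so there is no gap.
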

This result, including the application to classes of groups $\mathfrak Z$ with the stated closure properties is the content of Robinson's book: see \cite[5.2.5 and 5.2.6]{Rob96}. By taking $\mathfrak Z$ to be the class of nilpotent groups of finite torsion-free rank we deduce the

\begin{corollary}\label{cor2.3}
If $K$ is a nilpotent group of infinite torsion-free rank then $K/[K,K]$ has infinite torsion-free rank.
\end{corollary}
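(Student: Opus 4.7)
The plan is to apply Lemma~\ref{lem2.2} in contrapositive form, with $\mathfrak Z$ taken to be the class of soluble groups of finite torsion-free rank. Three closure conditions must be verified. Quotient-closure is immediate, since solubility passes to quotients and torsion-free rank can only decrease under them. Extension-closure follows from the additivity of torsion-free rank (Hirsch length) on short exact sequences of soluble groups, so if $N$ and $G/N$ both have finite rank then so does $G$. Finally, if $A$ and $B$ are abelian of finite torsion-free ranks $a$ and $b$, then
$$(A\otimes_\Z B)\otimes_\Z\Q \;\cong\; (A\otimes_\Z\Q)\otimes_\Q(B\otimes_\Z\Q)$$
has $\Q$-dimension $ab$, so $A\otimes_\Z B$ lies in $\mathfrak Z$.

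With the closure properties in hand, Lemma~\ref{lem2.2} delivers the equivalence: a nilpotent group $K$ belongs to $\mathfrak Z$ if and only if $K/[K,K]$ does. Since $K$ is already assumed nilpotent (hence soluble), $K\in\mathfrak Z$ precisely when $h(K)<\infty$; and since $K/[K,K]$ is abelian, its membership in $\mathfrak Z$ is equivalent to its having finite torsion-free rank. Contraposing the equivalence yields exactly the statement of the corollary.

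The main obstacle is merely the verification of the closure conditions for $\mathfrak Z$, all of which are standard; the substantive content is already buried in the surjection provided by Lemma~\ref{lem2.2} together with the fact that tensor products of finite-rank abelian groups have finite rank. An essentially equivalent but more direct route would bypass the general formalism: if $K/[K,K]$ had torsion-free rank $r<\infty$, the surjection from the $i$-fold tensor power of $K/[K,K]$ onto $\gamma_i(K)/\gamma_{i+1}(K)$ bounds the rank of each lower central factor by $r^i$, and summing over the (finitely many, by nilpotence) non-trivial factors gives $h(K)<\infty$, contradicting the hypothesis.
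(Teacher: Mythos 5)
Your proof is correct and takes essentially the same approach as the paper, which also deduces the corollary from Lemma~\ref{lem2.2} by a suitable choice of $\mathfrak Z$. The paper's stated choice is the class of \emph{nilpotent} groups of finite torsion-free rank, which, taken literally as a class of abstract groups, is not extension-closed; your choice of the class of soluble groups of finite torsion-free rank sidesteps that wrinkle cleanly, and your closing direct argument bounding the rank of $\gamma_i(K)/\gamma_{i+1}(K)$ by $r^i$ is a nice self-contained alternative.
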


\begin{proof}[Proof of the first part of Theorem A]
We may replace the original group by a quotient $G$ that lies in $\mathfrak A_0\mathfrak X_q$, that has infinite torsion-free rank, and so that $q$ is as small as possible amongst quotients of $G$ with these two properties. Quotienting by $\tau(G)$ we may also assume that $\tau(G)=1$. Let $A$ be an abelian normal subgroup such that $G/A\in\mathfrak X_q$. Since $\tau(G)=1$ we see that $A$ is torsion-free.

If $N$ is a nilpotent normal subgroup of $G$ then $K:=NA$ is nilpotent of infinite torsion-free rank and hence $K/[K,K]$ has infinite torsion-free rank by Corollary \ref{cor2.3}. It follows that $G/K$ has torsion-free rank $q$ and so $K/A$ is torsion. Again, since $\tau(G)=1$, we have that $K$ is torsion-free. By Lemma \ref{lem2.1}(iv) $K$ is abelian and therefore $N$ is abelian. Hence every nilpotent normal subgroup of $G$ is abelian. Therefore the Fitting subgroup is abelian and the remaining assertions in (i) follow as in \cite[1.2.10]{LennoxRobinson}.
\end{proof}

\subsection{The set of rational numbers}The set of rational numbers has two roles in this paper. First it is the prime field of characteristic zero and we denote this by $\Q$. Secondly it is a countable dense linear order and when in this guise we denote it by $\mathbf Q$. In general a poset is a set with a reflexive antisymmetric and transitive relation $\le$. We shall write $x<y$ to mean $(x\le y$ and $x\ne y)$. We also freely use the notation $x>y$ and $x\ge y$ to mean $y<x$ and $y\le x$ respectively. The interval notation $[x,y]$ is used for the set
$\{z;\ x\le z\le y\}$.
The poset $\mathbf Q$ enjoys a special role on account of

\begin{cantorstheorem}[Theorem 9.3 of \cite{perms}]
Every countable dense linearly ordered set without endpoints is order-isomorphic to $\mathbf Q$.
\end{cantorstheorem}

We refer the reader to Chapter 9 of \cite{perms} for a careful introduction to Cantor's theorem and its ramifications.

\subsection{On deviation and Krull dimension}

We write $\dev(\mathcal S)$ for the deviation of a poset $\mathcal S$. 
The definition can be found in \cite{MR} and can be stated like this:
\begin{enumerate}
\item
$\dev(\mathcal S)=-\infty$ if $\mathcal S$ is \emph{trivial} (meaning that $a\le b\implies a=b$ for all $a,b\in S$).
\item
$\dev(\mathcal S)=0$ if $\mathcal S$ is non-trivial and artinian.
\item And in general by transfinite induction: $\dev(\mathcal S)$ is defined and equal to the ordinal $\alpha$ if $\mathcal S$ does not have defined deviation $\beta$ for any predecessor $\beta$ of $\alpha$ and, in every strictly descending chain $x_0>x_1>x_2>\dots$, all but finitely many of the intervals  $[x_{i+1},x_i]
$ have deviation defined and preceding $\alpha$.
\end{enumerate}
For a group $G$ we write $\dev(G)$ for the deviation of the poset of subgroups of $G$ and we write $\krull(G)$ for the deviation of the poset of normal subgroups of $G$. This last is known as the \emph{Krull dimension} of $G$ \cite{Tus}. In ring theory, the Krull dimension of a module over a ring is defined to be the deviation of the poset of its submodules.

If a group $H$ acts on $M$, we write $\dev_H(M)$ for the deviation of $M$ as an $H$-group, which is the deviation of the subposet of subgroups of $M$ that are stable under the action of $H$.
Similarly, if a group $H$ acts on a group $M$ so that the action contains the inner automorphisms of $M$, we write $\krull_H(M)$ for the Krull dimension of $M$ as an $H$-group, defined as the deviation of the subposet of normal subgroups of $M$ that are stable under the action of $H$.

\begin{remark}
In this subsection, we shall visit two results, Lemmas \ref{lem-deviation nilpotent} and \ref{lem-deviation soluble} about deviation in nilpotent and soluble groups. In both cases there may be more general statements one could make using recent work of Cornulier, see \cite[Theorem 1.4]{Cor18}. 
\end{remark}

The next proposition studies how deviation and Krull dimension of $G$ behave with respects to extensions. It is stated in (\cite{J}, lemma 2.24) for the Krull dimension. The argument for the deviation is the same.

\begin{lemma}\label{lem-kdim g gps2}
Let 
\begin{equation*}
M \hookrightarrow G \overset{p}{\twoheadrightarrow} Q
\end{equation*}
be a sequence of $H$-groups.
Then,
\begin{align*}
\mathrm{Krull}_H (G) & = \max \{ \mathrm{Krull}_H (M), \mathrm{Krull}_H (Q)\}, \\
\dev_H(G) & = \max \{ \dev_H(M), \dev_H(Q)\}.
\end{align*}
\end{lemma}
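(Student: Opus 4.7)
My plan is to prove the formula for $\dev_H$; the Krull dimension version is obtained by running the identical argument inside the sub-poset of normal $H$-subgroups. Two elementary lemmas on deviation of posets drive the proof. First, a \emph{monotonicity principle}: if $f\colon \mathcal{S}\to\mathcal{T}$ is an order-preserving map all of whose fibres $f^{-1}(t)$ are antichains, then $\dev(\mathcal{S})\le\dev(\mathcal{T})$. The proof is a routine transfinite induction on $\dev(\mathcal{T})$: the image of any strictly descending chain in $\mathcal{S}$ is itself strictly descending (two comparable points of $\mathcal{S}$ cannot lie in one fibre), and the antichain-fibres property restricts to every sub-interval. Second, a \emph{product bound}: $\dev(\mathcal{S}_1\times\mathcal{S}_2)\le\max\{\dev(\mathcal{S}_1),\dev(\mathcal{S}_2)\}$, again by transfinite induction, using that an interval in the product is exactly the product of the corresponding coordinate intervals.

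The inequality $\dev_H(G)\ge\max\{\dev_H(M),\dev_H(Q)\}$ is immediate from the monotonicity principle applied to the inclusion $\mathcal{L}_H(M)\hookrightarrow\mathcal{L}_H(G)$ and to the preimage map $K\mapsto p^{-1}(K)$ from $\mathcal{L}_H(Q)$ into $\mathcal{L}_H(G)$; both are injective and hence trivially have antichain fibres. Here $\mathcal{L}_H(\cdot)$ denotes the poset of $H$-stable subgroups.

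For the opposite inequality I would introduce the order-preserving map
\[
\Phi\colon \mathcal{L}_H(G) \longrightarrow \mathcal{L}_H(M)\times\mathcal{L}_H(Q),\qquad J\longmapsto (J\cap M,\,p(J)).
\]
The crucial point is that $\Phi$ has antichain fibres: given two $H$-subgroups $J_1\subseteq J_2$ with $J_1\cap M=J_2\cap M$ and $p(J_1)=p(J_2)$, pick any $x\in J_2$ and choose $y\in J_1$ with $p(y)=p(x)$; then $y^{-1}x\in J_2\cap M=J_1\cap M\subseteq J_1$, so $x\in J_1$, whence $J_1=J_2$. Combining the monotonicity principle and the product bound yields
\[
\dev_H(G) \;\le\; \dev\bigl(\mathcal{L}_H(M)\times\mathcal{L}_H(Q)\bigr) \;\le\; \max\{\dev_H(M),\dev_H(Q)\}.
\]

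The one step that genuinely requires group theory is the antichain-fibres verification for $\Phi$; everything else is formal bookkeeping on posets, and I expect no further obstacle. The Krull dimension equality is proved by the very same argument with every ``$H$-subgroup'' replaced by ``normal $H$-subgroup,'' noting that $J\cap M$ and $p(J)$ remain normal $H$-subgroups whenever $J$ is.
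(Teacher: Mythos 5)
Your proof is correct. Note, however, that the paper does not give its own proof of this lemma: it simply remarks that the Krull-dimension case is stated as Lemma~2.24 in the reference \cite{J}, and that the deviation case follows by the same argument. So there is no in-paper proof to compare against; your argument is supplying what the paper leaves to the citation.

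That said, the route you take is the standard one. The two poset lemmas you isolate (a strictly order-preserving map $f\colon\mathcal S\to\mathcal T$ forces $\dev(\mathcal S)\le\dev(\mathcal T)$, and $\dev(\mathcal S_1\times\mathcal S_2)\le\max\{\dev(\mathcal S_1),\dev(\mathcal S_2)\}$) are essentially McConnell--Robson \cite[6.1.17]{MR}, and the map $J\mapsto(J\cap M,\,p(J))$ with your ``antichain fibres'' verification (that $J_1\le J_2$, $J_1\cap M=J_2\cap M$, $p(J_1)=p(J_2)$ force $J_1=J_2$) is exactly the group-theoretic incarnation of the argument used to prove the corresponding statement for modules via $L\mapsto(L\cap N,\,(L+N)/N)$. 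The $\ge$ direction via the two order-embeddings $\mathcal L_H(M)\hookrightarrow\mathcal L_H(G)$ and $K\mapsto p^{-1}(K)$ is likewise standard, and you are right that in the Krull-dimension version one only needs to observe that $J\cap M$ and $p(J)$ remain normal and $H$-stable when $J$ is. One small presentational remark: the product bound deserves a line of care in the induction, since in a strictly descending chain $(a_i,b_i)$ in the product the coordinate sequences are only weakly descending, and one must pass to the subsequences of strict descents in each coordinate to invoke the inductive hypothesis on $\mathcal S_1$ and $\mathcal S_2$ separately --- but this is routine and does not affect the correctness of the argument.
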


\begin{lemma}
Let $\mathcal S$ be a poset. Then $\dev(\mathcal S)$ exists if and only if $\mathcal S$ has no subposet isomorphic to $\mathbf{Q}$. 
\end{lemma}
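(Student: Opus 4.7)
The plan is to exploit the recursive structure of the definition of $\dev$ together with Cantor's theorem. First I would record a monotonicity principle: if $\mathcal{S}' \subseteq \mathcal{S}$ then $\dev(\mathcal{S}') \leq \dev(\mathcal{S})$ (with the convention that being undefined dominates every ordinal). This goes through by transfinite induction on $\dev(\mathcal{S})$, since any descending chain in $\mathcal{S}'$ is also a chain in $\mathcal{S}$ whose intervals in $\mathcal{S}'$ are subposets of the corresponding intervals in $\mathcal{S}$.

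For the forward direction, it then suffices to show $\dev(\mathbf{Q})$ is undefined. Assume for contradiction $\dev(\mathbf{Q}) = \alpha$ and pick any descending chain $y_0 > y_1 > \cdots$ in $\mathbf{Q}$. Each open interval $(y_{i+1}, y_i)$ is a countable dense linear order without endpoints, hence order-isomorphic to $\mathbf{Q}$ by Cantor's theorem. Monotonicity then forces $\dev([y_{i+1}, y_i]) \geq \alpha$ for every $i$, contradicting the requirement that cofinitely many of these deviations be strictly less than $\alpha$.

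For the reverse direction, arguing contrapositively, I assume $\dev(\mathcal{S})$ is undefined and construct a $\mathbf{Q}$-subposet. The principal step, and the main obstacle, is a splitting lemma: if $\dev([a,b])$ is undefined with $a < b$, then there exists $c$ with $a < c < b$ such that both $\dev([a,c])$ and $\dev([c,b])$ are undefined. To prove it I would first establish a chain-extraction statement: any poset with undefined deviation has an infinite descending chain in which infinitely many successive intervals again have undefined deviation. The point is that if in every chain only finitely many intervals were bad, then each chain $C$ would give a cofinite collection of good intervals with a countable set of deviations, bounded by some ordinal $\beta_C$; taking the sup over the set of descending chains yields an ordinal bound for $\dev(\mathcal{S})$, contradicting the assumption. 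Given such a chain $b = y_0 > y_1 > \cdots$ inside $[a,b]$, one picks two indices $i_1 < i_2$ witnessing bad intervals and sets $c := y_{i_1 + 1}$: then $[a,c]$ contains the bad $[y_{i_2 + 1}, y_{i_2}]$ and $[c,b]$ contains the bad $[y_{i_1 + 1}, y_{i_1}]$, and monotonicity concludes.

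Armed with the splitting lemma, the embedding is constructed by a standard back-and-forth: I enumerate the dyadic rationals in $(0,1)$ and place them one at a time in $\mathcal{S}$, maintaining the invariant that the placed points subdivide a bad ambient interval into bad subintervals. At each stage the splitting lemma, applied to the bad subinterval corresponding to the position of the next dyadic rational, provides the point at which to place it. The resulting image is a countable dense linear order without endpoints and hence order-isomorphic to $\mathbf{Q}$ by Cantor's theorem. The real work is concentrated in the chain-extraction lemma; the remainder of the argument is a routine exercise in bookkeeping.
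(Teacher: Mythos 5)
Your proof is correct, but it takes a different route from the paper: the paper's own ``proof'' is a one-line citation to McConnell and Robson \cite[Chapter 6, \S1.13]{MR}, who establish the equivalence with the poset $\mathbf D$ of dyadic rationals in $[0,1]$ in place of $\mathbf Q$, followed by the observation that Cantor's theorem reconciles $\mathbf D$ with $\mathbf Q$. You instead reconstruct a self-contained argument from the recursive definition of deviation. The forward direction (monotonicity of $\dev$ under passage to subposets, together with the observation that every interval in a descending chain in $\mathbf Q$ contains a fresh copy of $\mathbf Q$) is clean. For the converse you correctly isolate the load-bearing step as the chain-extraction claim: if every infinite descending chain had only finitely many ``bad'' intervals, then, since the descending chains form a \emph{set}, the ordinals $\beta_C$ have a supremum $\beta$, and the chain condition holds at $\beta+1$, forcing $\dev(\mathcal S)$ to exist --- this is the part that genuinely uses the transfinite recursion. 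The splitting lemma and the back-and-forth embedding of the dyadics in $(0,1)$ are then, as you say, bookkeeping; the only small point worth making explicit is that when applying chain extraction inside $[a,b]$ one should prepend $b$ to the extracted chain if it does not already begin at $b$ (harmless, since prepending cannot destroy bad intervals). What your approach buys is self-containment; what the paper's approach buys is brevity, since the substance of your argument is precisely what is carried out in the cited source.
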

\begin{proof}
McConnell and Robson supply a statement and proof in \cite[Chapter 6, \S1.13]{MR}, but using the poset $\mathbf D:=\{m2^n;\ m,n\in\Z\}\cap[0,1]$ instead of $\mathbf Q$. Cantor's theorem allows us to reconcile theirs with ours since it implies that $\mathbf D$ minus endpoints is order-isomorphic to $\mathbf Q$. \end{proof}

The Krull dimension of a nilpotent group can be expressed in terms of the dimension of the factors of its lower central series, and is equal to its deviation.

\begin{proposition}\label{prop-Kr nilp}
Let $N$ be a nilpotent group.
Then,
\begin{equation*}
\krull(N) = \max_{1 \leqslant i \leqslant n} \left\{ \krull (\gamma_i(N) / \gamma_{i+1}(N)) \right\} =  \max_{1 \leqslant i \leqslant n} \left\{ \dev (\gamma_i(N) / \gamma_{i+1}(N)) \right\} = \dev(N) ,
\end{equation*}
where $n$ denotes the nilpotency class of $N$ and the groups $\gamma_i(N)$ form the lower central series of $G$.
\end{proposition}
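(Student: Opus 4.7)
The plan is to induct on the nilpotency class $n$ of $N$, applying Lemma \ref{lem-kdim g gps2} twice: once with $H=N$ (inner conjugation) to handle $\krull$, and once with $H=1$ (trivial action) to handle $\dev$. The base case $n=1$ is immediate, since $N$ is abelian, every subgroup is normal, and all four quantities in the display collapse to the common value $\krull(N)=\dev(N)=\krull(\gamma_1(N)/\gamma_2(N))$.

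For the inductive step, I would consider the short exact sequence of $N$-groups
\begin{equation*}
\gamma_n(N)\hookrightarrow N\twoheadrightarrow N/\gamma_n(N),
\end{equation*}
where $N$ acts by conjugation and where $N/\gamma_n(N)$ has class $n-1$. For the $\krull$ half of the statement, the point is that $N$-invariant subgroups of $N$ (respectively of $N/\gamma_n(N)$) are exactly the normal subgroups, so $\krull_N(N)=\krull(N)$ and $\krull_N(N/\gamma_n(N))=\krull(N/\gamma_n(N))$. For the kernel one uses the crucial fact that $\gamma_n(N)\le\zeta(N)$ (as $N$ has class exactly $n$), so every subgroup of $\gamma_n(N)$ is automatically $N$-invariant, giving $\krull_N(\gamma_n(N))=\krull(\gamma_n(N))$. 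Lemma \ref{lem-kdim g gps2} then yields
\begin{equation*}
\krull(N)=\max\{\krull(\gamma_n(N)),\ \krull(N/\gamma_n(N))\},
\end{equation*}
and, noting that $\gamma_{n+1}(N)=1$ so $\gamma_n(N)=\gamma_n(N)/\gamma_{n+1}(N)$, induction produces the first equality of the proposition.

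The $\dev$ half is easier: taking $H=1$ in Lemma \ref{lem-kdim g gps2} gives $\dev_1=\dev$ trivially on every term, hence
\begin{equation*}
\dev(N)=\max\{\dev(\gamma_n(N)),\ \dev(N/\gamma_n(N))\},
\end{equation*}
and induction delivers the third equality. The middle equality is immediate from the observation that for any abelian group $A$ one has $\krull(A)=\dev(A)$, since subgroups and normal subgroups coincide; this applies to each factor $\gamma_i(N)/\gamma_{i+1}(N)$, which is abelian by definition of the lower central series.

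The only real bookkeeping obstacle is checking, in the $\krull$ case, that $N$-invariance coincides with normality on all three terms of the extension; this is exactly where centrality of $\gamma_n(N)$ is used. I do not anticipate any substantive difficulty beyond this identification.
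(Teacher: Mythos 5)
Your proof is correct and follows essentially the same route as the paper: both apply Lemma \ref{lem-kdim g gps2} along the lower central series and exploit the triviality of the $N$-action on each factor $\gamma_i(N)/\gamma_{i+1}(N)$. The paper phrases this as ``iterated applications'' of the lemma followed by the uniform observation that conjugation acts trivially on each $\gamma_i/\gamma_{i+1}$, whereas you organize the same iteration as an induction on class, peeling off the central bottom term $\gamma_n(N)\le\zeta(N)$ at each step; you are also somewhat more explicit than the paper in noting that the $\dev$ formula is obtained by taking $H=1$ in the lemma.
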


\begin{proof}
Iterated applications of Lemma \ref{lem-kdim g gps2} yields
\[ \krull(N) = \max_{i = 1\dots n} \left\{ \krull_N(\gamma_i(N)/\gamma_{i+1}(N)) \right\} \]
and\[  \dev(N) = \max_{i = 1\dots n} \left\{ \dev(\gamma_i(N)/\gamma_{i+1}(N)) \right\}\]

The action of $N$ on the factor $\gamma_i(N)/\gamma_{i+1}(N)$ is trivial. Indeed, let $n \in N$ and $n_i\gamma_{i+1}(N)$ an element of this factor, where $n_i \in \gamma_i(N)$. Then, $n\cdot n_i\gamma_{i+1}(N) = nn_in^{-1}\gamma_{i+1}(N) = n_i[n_i^{-1}, n]\gamma_{i+1}(N) = n_i\gamma_{i+1}(N)$ and
therefore we get the formula stated above.
\end{proof}

\begin{lemma}\label{lem-deviation nilpotent}
Let $G$ be a nilpotent group. Then the following are equivalent:
\begin{enumerate}
\item
$\dev(G)$ exists.
\item
$\dev(G)\le 1$.
\item
$\krull(G)$ exists.
\item
$\krull(G)\le 1$.
\item
$G$ is minimax.
\end{enumerate}
\end{lemma}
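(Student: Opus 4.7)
The strategy is to reduce the entire equivalence to the abelian case via Proposition~\ref{prop-Kr nilp} and then invoke the structure theory of abelian groups with deviation. First I would note that Proposition~\ref{prop-Kr nilp} gives $\dev(G)=\krull(G)$ for any nilpotent $G$, which immediately supplies (i)$\iff$(iii) and (ii)$\iff$(iv), together with the trivial implications (ii)$\implies$(i) and (iv)$\implies$(iii). The same proposition writes both invariants as the maximum of $\dev(\gamma_i(G)/\gamma_{i+1}(G))$ over the finitely many factors of the lower central series, so (i) (resp.\ (ii)) holds if and only if every such \emph{abelian} factor has deviation defined (resp.\ at most $1$). Parallel to this, condition (v) is equivalent to each factor $\gamma_i(G)/\gamma_{i+1}(G)$ being minimax abelian, since the class $\mathfrak M$ of minimax groups is closed under subgroups, quotients, and extensions; alternatively this follows from Lemma~\ref{lem2.2} applied with $\mathfrak Z=\mathfrak M$, using that tensor products of abelian minimax groups are minimax.

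With these reductions in hand the lemma amounts to the abelian claim: for an abelian $A$ the three properties ``$\dev(A)$ exists'', ``$\dev(A)\le 1$'', and ``$A$ is minimax'' are equivalent. For the easy direction I would take a finite series in a minimax $A$ whose factors are infinite cyclic, finite cyclic, or quasicyclic, and apply the extension formula of Lemma~\ref{lem-kdim g gps2} together with the computations $\dev(\Z)=1$ (the chain $\Z\supset 2\Z\supset 4\Z\supset\cdots$ has finite, hence artinian, intervals) and $\dev(C_{p^\infty})\le 0$ to obtain $\dev(A)\le 1$.

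The converse is where the main work lies. Assuming $\dev(A)$ is defined, I would use the criterion recalled just above---that the subgroup poset of $A$ then contains no copy of $\mathbf Q$---and argue the contrapositive: starting from a non-minimax abelian $A$, produce such a subposet. The standard structure theory of abelian groups splits the failure of minimaxness into three scenarios, according to the torsion subgroup $T(A)$ of $A$: (a) $A/T(A)$ has infinite torsion-free rank; (b) $T(A)$ is not Artinian, either because infinitely many primes appear in its support or because some $p$-component has infinite $p$-rank; (c) a ``$\Q$-type'' obstruction, in which (a) and (b) both fail but nevertheless no finitely generated subgroup $F\le A$ has $A/F$ Artinian. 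In cases (a) and (b) one embeds an infinite direct sum of nontrivial cyclic groups into $A$ and indexes subgroups by Dedekind cuts in $\mathbf Q$; case (c) is the main obstacle, since no obvious infinite direct sum sits inside $A$, and one must instead extract a $\mathbf Q$-subposet from the supernatural-number description of the subgroups of $A/F$ for a suitable finitely generated $F$. Once the abelian claim is in place, the reductions assembled in the first paragraph deliver the full equivalence (i)--(v).
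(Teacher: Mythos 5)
Your reduction to the abelian case via Proposition~\ref{prop-Kr nilp} and the closure of $\mathfrak M$ under extension, the identifications (i)$\iff$(iii) and (ii)$\iff$(iv), and the easy direction (v)$\implies$(ii) via a series with cyclic and quasicyclic factors all match the paper's argument. Where you genuinely diverge is in the hard implication (i)$\implies$(v) for abelian $A$: the paper does not prove this but simply cites Lemma~4.6 of \cite{BCGS}, whereas you propose a direct proof by producing a $\mathbf Q$-subposet of the subgroup lattice whenever $A$ is not minimax. That is a legitimate alternative route, and it has the merit of being self-contained modulo the no-$\mathbf Q$-subposet characterisation of deviation, at the cost of some case analysis.

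Your trichotomy (a)/(b)/(c) is exhaustive, since if $A/T(A)$ has finite rank, $T(A)$ is Artinian, and some finitely generated $F\le A$ has $A/F$ Artinian, then $A$ is Noetherian-by-Artinian and hence minimax. However, the difficulty you attribute to case (c) is smaller than you suggest: it in fact reduces to case (b). Having ruled out (a) and (b), you may first replace $A$ by $A/T(A)$ (an Artinian kernel does not affect minimaxness), so that $A$ is torsion-free of finite rank $n$ and not minimax. Choose a free abelian subgroup $F\le A$ of rank $n$. Then $A/F$ embeds in $(\Q/\Z)^n$, so each of its primary components has rank at most $n$ and is therefore Artinian. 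If only finitely many primes occurred in $A/F$ it would be Artinian and $A$ would be minimax, a contradiction; hence infinitely many primes occur, the socle of $A/F$ contains an infinite direct sum of prime-order cyclic groups, and the case-(b) construction of a $\mathbf Q$-subposet applies inside the interval $[F,A]$ of the subgroup lattice of $A$. With this observation your case (c) closes cleanly, and the direct argument is complete.
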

\begin{proof}
Proposition \ref{prop-Kr nilp} above and stability under extension of the minimax property imply that it is enough to prove the lemma for abelian groups. Hence, we may assume that $G$ is abelian. Note that $(i)$ and $(iii)$ are the same, as well as $(ii)$ and $(iv)$. We shall prove that $(v) \Rightarrow (ii) \Rightarrow (i) \Rightarrow (v)$. 

If $G$ is a minimax abelian group, then it is max-by-min. Finitely generated abelian groups have deviation $0$ or $1$ (\cite{J}, lemma 2.19), hence $G$ has deviation less or equal to $1$. 
It follows at once that $(ii) \Rightarrow (i)$, and $(i) \Rightarrow (v)$ is proved in (\cite{BCGS}, Lemma 4.6). 
\end{proof}
An alternative proof of $(i)\Rightarrow(v)$ can be devised by employing the variation on Corollary 2.3 that says $G$ is minimax if $G/[G,G]$ is minimax.

\medskip

\begin{remark}
Therefore, the Krull dimension of a nilpotent group $N$
\begin{itemize}
\item is $-\infty$ if $N = \{0\}$,
\item is $0$ if $N$ is non-trivial artinian,
\item is $1$ if $N$ is minimax non-artinian,
\item is not defined otherwise.
\end{itemize}
\end{remark}

The following similar lemma already appeared in the paper \cite{Tus} by Tushev. We provide a different proof. 

\begin{lemma}[\cite{Tus}]\label{lem-deviation soluble}
Let $G$ be a soluble group. Then the following are equivalent:
\begin{enumerate}
\item
$\dev(G)$ exists.
\item
$\dev(G)\le 1$.
\item
$G$ is minimax.
\end{enumerate}
\end{lemma}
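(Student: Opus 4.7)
The implication (ii) $\Rightarrow$ (i) is immediate by definition, so it suffices to establish (iii) $\Rightarrow$ (ii) and (i) $\Rightarrow$ (iii). For (iii) $\Rightarrow$ (ii), assume $G$ is soluble and minimax. By definition $G$ admits a finite series in which each factor is cyclic or quasicyclic. Each such factor has deviation at most $1$ (an infinite cyclic group has deviation $1$; a finite cyclic or quasicyclic group is artinian and so has deviation $0$). Applying Lemma~\ref{lem-kdim g gps2} with trivial $H$ to successive two-step sections in this series, and iterating, we conclude $\dev(G)\leq 1$.

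The substantive content is (i) $\Rightarrow$ (iii). I would proceed by induction on the derived length $d$ of $G$, with the base case $d\leq 1$ (i.e.\ $G$ abelian) being precisely Lemma~\ref{lem-deviation nilpotent}. For the inductive step suppose $d\geq 2$ and that the result holds for soluble groups of shorter derived length. The key hereditary observation is that having defined deviation descends to subgroups and quotients: the subgroup poset of any $H\le G$ sits inside that of $G$ as a subposet, and the subgroup poset of $G/N$ is isomorphic to the subposet of subgroups of $G$ containing $N$. Since the deviation of a subposet cannot exceed that of the ambient poset, both $[G,G]$ and $G/[G,G]$ have defined deviation.

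Now $G/[G,G]$ is abelian with defined deviation, hence minimax by Lemma~\ref{lem-deviation nilpotent}. The derived subgroup $[G,G]$ is soluble of derived length $d-1$ and has defined deviation, so by the inductive hypothesis it is minimax. The class of minimax groups is closed under extensions — one splices a cyclic/quasicyclic series of $[G,G]$ with the preimage in $G$ of such a series for $G/[G,G]$ — so $G$ itself is minimax, completing the induction.

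The main delicacy in the argument lies in the base case, which has already been absorbed into Lemma~\ref{lem-deviation nilpotent}; what is left to highlight in the write-up is simply the hereditary behaviour of deviation under the passage to the characteristic sections $[G,G]$ and $G/[G,G]$, together with the closure of minimax under extensions, both of which are standard but worth recording explicitly.
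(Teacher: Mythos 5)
Your proof is correct and follows essentially the same route as the paper's: induction on derived length, with the abelian base case handled by Lemma~\ref{lem-deviation nilpotent}. The paper states this in two sentences; you have simply spelled out the inductive step (deviation passes to $[G,G]$ and $G/[G,G]$, then use extension-closure of the minimax class) and the easy implications, all of which are sound.
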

\begin{proof}
If $G$ is abelian, this is the content of the previous lemma. If $G$ is soluble, the result follows by induction on its derived length.
\end{proof}

\medskip

The Krull dimension of metanilpotent groups can be expressed using particular module sections.

\begin{proposition}
Let $G$ be a metanilpotent group, that is
\begin{equation*}
N \hookrightarrow G \overset{p}{\twoheadrightarrow} P
\end{equation*}
where $N$ and $P$ are nilpotent. Then, 
\begin{equation*}
\krull(G) = \max\left\{\max\limits_{i = 1\dots n} \big\{\krull_{\Z P}(\gamma_i(N)/\gamma_{i+1}(N))\big\}, \max\limits_{j = 1\dots p} \big\{\krull(\gamma_i(P)/\gamma_{i+1}(P))\big\}\right\},
\end{equation*}
where $n$, resp. $p$, denote the nilpotency class of $N$, resp. $P$, and the groups $\gamma_i(N)$, resp. $\gamma_j(P)$, form the lower central series of $N$, resp. $P$.
\end{proposition}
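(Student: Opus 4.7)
The plan is to decompose the computation of $\krull(G)$ in two stages using the extension lemma (Lemma \ref{lem-kdim g gps2}), first across the given extension $N\hookrightarrow G\twoheadrightarrow P$ and then along the lower central series of $N$ and of $P$ separately. Viewing everything as $G$-groups with $G$ acting by conjugation, Lemma \ref{lem-kdim g gps2} gives
\[
\krull(G)=\max\{\krull_G(N),\krull_G(P)\}.
\]
The second term reduces to $\krull(P)$: the $G$-action on $P=G/N$ factors through the quotient $P$ acting on itself by inner automorphisms, so the $G$-invariant normal subgroups of $P$ are exactly the normal subgroups of $P$. Hence $\krull_G(P)=\krull(P)$, and Proposition \ref{prop-Kr nilp} expresses this as $\max_{j}\krull(\gamma_j(P)/\gamma_{j+1}(P))$.

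For $\krull_G(N)$, I would run the same argument as in Proposition \ref{prop-Kr nilp} but relativized to the $G$-action. The lower central series terms $\gamma_i(N)$ are characteristic in $N$, hence normal in $G$, so iterating Lemma \ref{lem-kdim g gps2} along
\[
1=\gamma_{n+1}(N)\triangleleft\gamma_n(N)\triangleleft\dots\triangleleft\gamma_1(N)=N
\]
gives $\krull_G(N)=\max_{i}\krull_G(\gamma_i(N)/\gamma_{i+1}(N))$. Now each factor $\gamma_i(N)/\gamma_{i+1}(N)$ is abelian, and $N$ acts trivially on it by the same central-series calculation used in the proof of Proposition \ref{prop-Kr nilp}. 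The $G$-action therefore factors through $G/N=P$, endowing $\gamma_i(N)/\gamma_{i+1}(N)$ with the structure of a $\Z P$-module, and the $G$-invariant subgroups coincide with the $\Z P$-submodules. Consequently $\krull_G(\gamma_i(N)/\gamma_{i+1}(N))=\krull_{\Z P}(\gamma_i(N)/\gamma_{i+1}(N))$, and assembling the two pieces yields the formula in the statement.

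The main obstacle is purely notational bookkeeping: verifying that the extension lemma is applied with the correct equivariant structures at every step, in particular that (a) each $\gamma_i(N)$ is $G$-stable so the iteration is legitimate, (b) the inner-automorphism hypothesis in the definition of $\krull_G(-)$ is satisfied at each stage (trivially in the abelian factors, and via the inclusion of inner automorphisms of $N$ and of $P$ in the conjugation action for the intermediate applications), and (c) passing from the $G$-action on the abelian factor to the $\Z P$-action preserves the subposet of invariant subgroups. None of these checks looks difficult, and no new ingredient beyond the lemmas already established in the section is needed.
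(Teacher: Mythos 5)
Your proposal is correct and follows essentially the same route as the paper's proof: apply the extension lemma once across $N\hookrightarrow G\twoheadrightarrow P$, reduce $\krull_G(P)$ to $\krull(P)$ via Proposition \ref{prop-Kr nilp}, iterate the extension lemma along the lower central series of $N$, and observe that on each abelian factor $\gamma_i(N)/\gamma_{i+1}(N)$ the $G$-action factors through $P$, turning the relevant poset into the lattice of $\Z P$-submodules.
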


\begin{proof}
Lemma \ref{lem-kdim g gps2} applied to the action of $G$ by conjugation yields \[\krull(G) = \max\{ \krull_G(N), \krull_G(P)\}.\]

First, note that the $G$-action on $P$ is actually a $P$-action and $\krull_G(P) = \krull(P)$. The desired formula for this last term is given in Proposition \ref{prop-Kr nilp}. Hence, we are left with studying $\krull_G(N)$.

Using the decomposition of $N$, we get
\begin{equation*}
\krull_G(N) = \max\limits_{i = 1 \dots n}\big\{ \krull_G(\gamma_i(N)/\gamma_{i+1}(N)) \big\}.
\end{equation*}
For brevity, write $\gamma_i$ for $\gamma_i(N)$. We claim that the $G$-action on $\gamma_i/\gamma_{i+1}$ induces an action of the quotient $P$, for $1 \leq i \leq n$. Indeed, let $x\gamma_{i+1}$ be an element of $\gamma_i/\gamma_{i+1}$ and $g, g'$ two elements of $G$ such that $g' = gn$, for some $n$ in $N$. We have
$g'\cdot x\gamma_{i+1} = g'x(g')^{-1} \gamma_{i +1} = g\cdot (nxn^{-1}\gamma_{i+1}) = g\cdot x\gamma_{i+1}$, where the last equality uses $nxn^{-1} = x[x^{-1}, n] \in x\gamma_{i+1}$. Hence, the action of an element of $G$ only depends on its image on the quotient $P$. Moreover, the groups $\gamma_i/\gamma_{i +1}$ are abelian groups, hence their dimension as $P$-groups is equal to their dimension as $\Z P$-modules. 

This proves the formula.
\end{proof}

\begin{remarks}\label{rem-kd apn npa}~

\begin{enumerate}
\item When $N$ is abelian (so that $G$ is abelian-by-nilpotent), we have
\begin{equation*}
\krull(G) = \max\left\{\krull_{\Z P}(N), \max\limits_{i = 1\dots n} \big\{\krull(\gamma_i(P)/\gamma_{i+1}(P))\big\}\right\}.
\end{equation*}
If $G$ is moreover finitely generated, $N$ is a finitely generated module over a Noetherian ring, hence is Noetherian. Indeed, this is a result, due to Hall \cite{Hall54} that the integral group ring of a polycyclic group is Noetherian. Hence, $G$ admits a Krull dimension. 
\item When $P$ is abelian (so that $G$ is nilpotent-by-abelian), we have
\begin{equation*}
\krull(G) = \max \left\{\krull(P), \max\limits_{i = 1 \dots n} \big\{ \krull_{\Z P}(\gamma_i(N)/\gamma_{i+1}(N)) \big\}\right\}.
\end{equation*}
If $G$ is moreover finitely generated, $\krull(P)$ is either $0$ or $1$ and the $\Z P$-modules $N_i/N_{i +1}$ are finitely generated, hence Noetherian. As a consequence, $G$ admits a Krull dimension. 
\end{enumerate}
\end{remarks}

\medskip

\begin{example}\label{Krull infinite tf rk and sections} 
Let $\Z$ act on the polynomial ring $\Z[X]$ by $f(X)*m:=f(X+m)$. Then
$\Z[X]\rtimes_*\Z$ is metabelian, locally nilpotent and has 
\begin{enumerate}
\item
Krull dimension 2,
\item
infinite torsion-free rank, and
\item
no section isomorphic to $\Z\wreath\Z$.
\end{enumerate}
\end{example}
\begin{proof}
Denote by $G$ the group $\Z[X]\rtimes_*\Z$.
\begin{enumerate}
\item The group $G$ is metabelian, we have the following short exact sequence $\Z[X] \mono G \epi \Z$.
Therefore, by (\cite{J}, proposition $2.24$) its Krull dimension is the Krull dimension of the $\Z\Z$-module $\Z[X]$. As the annihilator of $\Z[X]$ in the group ring $\Z\Z$ is trivial, this dimension equals $2$.
\item
$G$ has infinite torsion-free rank, as it contains $\Z[X]$ which is abelian of infinite torsion-free rank.
\item Existence of a section of $G$ isomorphic to $\Z\wr\Z$ would contradict local nilpotency.
\end{enumerate}
\end{proof}

\section{The Main Structure Theorem}

Let $\mathfrak V$ denote the class of finitely generated groups in
$\mathfrak A_0\mathfrak X\setminus\mathfrak X$ and let $\mathfrak U$ denote the class of those $\mathfrak V$-groups that have no $\Z\wreath\Z$ sections. Our goal in this section is to provide a description of the groups in $\mathfrak U$. Of course all such groups have a quotient satisfying all the conclusions of the Theorem A, but our structure theorem applies to arbitrary $\mathfrak U$-groups.

\medskip

For a $\Q Q$-module $V$, define the \emph{top rank} of $V$ to be the minimum over the dimensions of the irreducible quotients of $V$, that is
\[ \toprk_Q V = \min \{ \dim_\Q V/W \mid W \text{ is a maximal proper submodule of } V\}. \]

\begin{theoremb}\label{main}
Let $G$ be a $\mathfrak U$-group. 
Then $G$ has subgroups $A\subset K$ and $(A_j)_{j\in\N}$ such that the following hold:
\begin{enumerate}
\item All the subgroups $A_i$, $A$ and $K$ are normal.
\item $A$ is torsion-free abelian of infinite rank.
\item For each $j$, $A_j\subset A$, $A_j$ has finite rank, and $A$ is the direct product of the $A_j$.
\item $K/A$ is locally finite.
\item $G/K$ is a virtually torsion-free $\mathfrak M$-group.
\item For each $j$, $K/C_K(A_j)$ is finite.
\item For each subgroup $H$ of finite index in $K$ that is normal in 
$G$, $C_A(H)$ has finite rank.
\item For every $r$ in $\N$, the set $\{ j \in \N \mid \toprk_{G/A} (A_j\otimes \Q) \leq r \}$ is finite.
\end{enumerate}
\end{theoremb}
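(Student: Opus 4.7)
The plan is to begin from a $\mathfrak{U}$-group $G$, use $G\in\mathfrak{A}_0\mathfrak{X}$ to extract an abelian normal subgroup, and then invoke the no-$\Z\wr\Z$ hypothesis to refine it into the required direct product decomposition.

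First I would pick a torsion-free abelian normal subgroup $N$ of $G$ with $G/N\in\mathfrak{X}$, which exists because $G\in\mathfrak{V}$. Since $G$ is finitely generated soluble and $G/N$ has finite torsion-free rank, $G/N\in\mathfrak{M}$. Letting $K$ be the preimage of $\tau(G/N)$ then gives $K/N$ locally finite and $G/K$ a torsion-free minimax soluble group, hence polycyclic, establishing (v). Parts (i) and (iv) will be settled once $A$ is constructed inside $N$.

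The core of the proof is the decomposition. View $N\otimes\Q$ as a $\Q[G/N]$-module. The no-$\Z\wr\Z$ hypothesis implies that for every $g\in G$ of infinite order modulo torsion and every $a\in N$, the orbit of $a$ under $\langle g\rangle$ generates a finite-rank subgroup (otherwise $\langle g,a\rangle$ would contain $\Z\wr\Z$), so $\langle g\rangle$ acts locally finite-dimensionally on $N\otimes\Q$. Combined with the polycyclicity of $G/K$ and Maschke's theorem (applied to the locally finite action of $K/N$ in characteristic zero), this forces a decomposition $N\otimes\Q=\bigoplus_j V_j$ into finite-dimensional $G$-invariant subspaces. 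Setting $A_j:=N\cap V_j$ and $A:=\prod_j A_j$ gives (ii) and (iii). For (vi), the image of $K$ in $\operatorname{Aut}(A_j\otimes\Q)\subseteq\GL_n(\Q)$ is a locally finite linear group of bounded exponent (over $\Q$ the only roots of unity are $\pm 1$), hence finite by Schur's theorem on linear groups of bounded exponent.

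For (viii) I would argue by contradiction. Suppose some $r\in\N$ has $\{j:\toprk_{G/A}(A_j\otimes\Q)\le r\}$ infinite. Passing to an infinite subfamily with a common isomorphism type of irreducible $\Q[G/A]$-quotient $V$ of dimension $\le r$ yields a $G$-equivariant surjection $A\otimes\Q\to V^{(\N)}$. An element $g\in G$ acting nontrivially on $V$ (which must exist, since otherwise the image would be finite and $A$ would have finite rank) acts by a shift on the direct sum, producing after pulling back to $A$ a free abelian subgroup of infinite rank with $\langle g\rangle$-translates generating a $\Z\wr\Z$ section of $G$, contradicting the hypothesis. Condition (vii) follows from (viii): if $H\triangleleft G$ has finite index in $K$ and acts trivially on $A_j$, then $K$ acts on $A_j$ through the finite group $K/H$, bounding the top rank of $A_j$ in terms of $[K:H]$; this can happen only finitely often by (viii), and so $C_A(H)$ is concentrated in finitely many $A_j$ and has finite rank.

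The hard parts will be, first, the upgrade of the $K/N$-invariant decomposition to a genuine $G$-invariant one with finite-dimensional summands (which requires the polycyclic action of $G/K$ to interact coherently with the $K/N$-isotypic structure and uses the no-$\Z\wr\Z$ hypothesis in its full strength, not merely its cyclic consequence), and, second, the construction in (viii) of an actual $\Z\wr\Z$ section out of an infinite family of small-dimensional quotients — here one must carefully pick a $g\in G$ whose induced action on $V^{(\N)}$ is a genuine shift rather than stabilising each copy.
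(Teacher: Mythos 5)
Your overall framework matches the paper's --- pick a torsion-free abelian normal $A$ with $G/A=Q\in\mathfrak X$, let $K$ be the preimage of $\tau(Q)$, pass to $V=A\otimes\Q$, use the no-$\Z\wr\Z$ hypothesis to see $V$ is a constrained $\Q Q$-module and hence locally finite dimensional, then decompose via Maschke over $T=K/A$. But the argument has three genuine problems.

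First, a small one: you assert $G/K$ is ``torsion-free minimax, hence polycyclic.'' Torsion-free minimax groups need not be polycyclic (consider $\Z[1/2]\rtimes\Z$), and $(G/A)/\tau(G/A)$ need not be torsion-free. The theorem only claims that $G/K$ is a \emph{virtually} torsion-free $\mathfrak M$-group, and you should not import polycyclicity into later steps.

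Second, and this is the real gap: you write that local finite-dimensionality plus Maschke ``forces a decomposition $N\otimes\Q=\bigoplus_j V_j$ into finite-dimensional $G$-invariant subspaces.'' This is exactly what does \emph{not} come for free. Maschke gives a decomposition of $V$ into simple $\Q T$-modules, and Lemma \ref{lem-direct sum} regroups these by $Q$-orbit of isomorphism type into a direct sum of $\Q Q$-submodules $W_\sigma$. Local finite-dimensionality of $V$ tells you each cyclic $\Q Q$-submodule is finite dimensional, but an isotypic piece $W_\sigma$ is typically not cyclic, and nothing so far bounds its dimension (for instance, if $T$ acted trivially on all of $V$, there would be a single orbit and $W_\sigma=V$ would be infinite dimensional). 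The paper's proof of this finiteness is the cohomological heart of the argument: one observes $W_\sigma\subset V^{T_0}$ for a suitable finite-index normal $T_0\le T$, that $V^{T_0}\cap[V,T_0]=0$, and then shows $V/[V,T_0]$ is finite dimensional by looking at the extension class $\xi\in H^2(Q,A)$, pushing it to $H^2(Q,V/[V,T_0])$, applying Lemma \ref{fin-dim} (a version of the ``finite support of cohomology'' result from \cite{K84}) to find a finite-dimensional $L/[V,T_0]$ supporting $\xi$, and using finite generation of $G$ to force $V=L$. None of that appears in your proposal; you correctly identify it as ``the hard part'' but supply no substitute for it.

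Third, your argument for (viii) does not work as stated. After passing to an infinite subfamily with a common irreducible quotient type $V$, the quotient map $A\otimes\Q\to V^{(\N)}$ is $G$-equivariant with $G$ acting \emph{diagonally} on $V^{(\N)}$ --- each $A_j$ is $G$-invariant, so no element $g$ permutes the coordinates. There is no shift, and hence no immediate $\Z\wr\Z$. The paper instead passes to a quotient $\bar G$ supported on $\bigoplus_{j\in J_r}\bar A_j$, finds a uniform finite-index normal $H\le\bar G$ whose derived subgroup $H^{(1)}$ acts nilpotently on each $\bar A_j$ (using that $\bar G/C_{\bar G}(\bar A_j)\hookrightarrow\GL_r(\Q)$ with $r$ bounded), deduces that $H^{(1)}$ is nilpotent of infinite torsion-free rank, and therefore (via Corollary \ref{cor2.3}) produces a metabelian quotient of infinite torsion-free rank, which does contain a $\Z\wr\Z$ section. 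You would need to rebuild (viii) along these lines. Similarly, (vii) in the paper is proved by the same cohomological mechanism as Claim 3, not as a corollary of (viii).
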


The following corollary should be compared with the main result of \cite{K84}: a finitely generated soluble group is either minimax or contains a section isomorphic to $\Z/p\Z\wr \Z$.

\begin{corollaryb1}\label{thm}
Let $G$ be a finitely generated soluble group with Krull dimension. Then $G$ has finite torsion-free rank if and only if $G$ has no sections isomorphic to $\Z\wreath\Z$.
\end{corollaryb1}
Example \ref{Krull infinite tf rk and sections} shows that finite generation is an essential hypothesis here.
\begin{proof}
The direct implication follows from the fact that $\Z \wr \Z$ has infinite torsion-free rank.

We prove the reverse implication. Let $G$ be a finitely generated soluble group with Krull dimension that has no section isomorphic to $\Z \wr \Z$.
We proceed by induction on the length of the derived series of $G$. Let $B$ be the last non-trivial term of this series. By induction, $G/B$ has finite torsion-free rank. We aim to prove that $B$ has finite torsion-free rank as well.

Quotienting out the torsion subgroup of $B$ has no impact on the torsion-free rank, thus we may assume that $B$ is a torsion-free abelian normal subgroup of $G$.

By contradiction, if $B$ were not of finite torsion-free rank, the group $G$ would belong to the class $\mathfrak{U}$.
Therefore, the description given in Theorem B would provide a torsion-free abelian normal subgroup $A$ of $G$, of infinite rank, such that $A$ is the direct product of infinitely many $\Z Q$-modules, where $Q$ stands for the quotient group $G/A$. Hence $A$ would have infinite uniform dimension, and consequently would not admit a Krull dimension, by \cite[6.2.6]{MR}. This contradicts the existence of the Krull dimension of $G$.
\end{proof}

\begin{corollaryb2}
Let $G$ be a finitely generated soluble group with no $\Z\wr\Z$ sections.
Then $G$ has finite torsion-free rank if and only if there is a finite bound on the torsion-free ranks of the metabelian-by-finite quotients of $G$. 
\end{corollaryb2}

\begin{proof}
We need to show that if the rank of $G$ is infinite then there are metabelian-by-finite quotients of arbitrarily large rank. Assume then that $G$ has infinite rank.
Theorem B applies and $G$ has a normal subgroup $A$ such that the quotient $Q = G/A$ belongs to $\mathfrak X$. The group $A$ is the direct product of infinitely many torsion-free abelian groups $A_j$ of finite rank. 

By $(viii)$, for every fixed integer $r$, one can find a $j$ with $\toprk_Q(A_j\otimes\Q) > r$. Hence there is a maximal proper $\Q Q$-submodule $W$ of $A_j\otimes Q$ such that  the rank of $A_j/(A_j\cap W)$ is greater than $r$.  Let $p_j:A\to A_j$ denote the projection and let $B$ denote the kernel of the composite $A\buildrel p_j\over\to A_j\to A_j/(A_j\cap W)$. Then $B$ is normal in $G$ and writing $\bar G$ for $G/B$ and $\bar A$ for $A/B$ we have a short exact sequence
\[ \bar A \mono \bar G \epi Q. \]

Denote by $K$ the Fitting subgroup of $\bar G$. Let $L$ denote the isolator in $K$ of $K^{(1)}$, that is $L=\{x\in K;\ x^m\in K^{(1)}\textrm{ for some }m\in\N\}$.
There are two possibilities : either $L\cap \bar A = \{1\}$ or $\bar A \subset L$.

The first case produces a metabelian-by-finite quotient of $G$ with torsion-free rank greater than $r$. 

In the second case the quotient group $K/L$ is a section of $Q$.
Denote by $c$ the class of the Fitting subgroup of $Q$ and by $h$ the Hirsch length of its abelianization. The Hirsch length of $K/L$ is bounded above by $h$. A version of
Lemma \ref{lem2.2} for the isolator series applied to the group $K$ then implies that the rank of $A$ is bounded by $h^c$.
Therefore, it cannot happen when $r>h^c$.
\end{proof}

The theorem also has the following consequence for random walks on soluble linear groups. We refer for example to the survey of Tessera \cite{Tsurvey} for background and definitions. In \cite{J}, a group is said to have large return probability whenever its return probability is equivalent to $\exp(-n^\frac1{3})$.

\begin{corollaryb3}
Let $G$ be a finitely generated soluble linear group. Then either $G$ has large return probability or $G$ has a section isomorphic to $\Z\wr\Z$. 
\end{corollaryb3}

\begin{proof}
By results of \Maltsev \cite{Mal51} and Schur, the group $G$ has a finite index subgroup $H$ which is virtually torsion-free nilpotent-by-abelian. By Remark \ref{rem-kd apn npa}, $(ii)$, the group $H$, admits a Krull dimension. The dichotomy of Corollary $1$ above applies: either $H$ has finite torsion-free rank or $H$ has a section isomorphic to $\Z\wr \Z$. Therefore, either $G$ has finite rank or $G$ has a section isomorphic to $\Z\wr\Z$. The lower bound for the return probability of finitely generated soluble groups of finite rank is due to Pittet and Saloff-Coste \cite{PSC03}.
\end{proof}

\begin{proof}[Completion of the Proof of Theorem A]
Suppose now that $G$ is a finitely generated soluble group with the following properties. 
\begin{itemize}
\item The Fitting subgroup $F$ of $G$ is torsion-free abelian (of infinite torsion-free rank).
\item $G/F$ has finite torsion-free rank.
\item $\tau(G)$ is trivial
\item $G$ has no $\Z\wr\Z$ sections
\end{itemize}
Let $R$ denote the finite residual of $G$.
Let $A_j$ be a family of subgroups of $A$ as in the statement of Theorem B. For each $j$ let $A_j^*$ denote the direct sum of all $A_i$ with $i\ne j$. Then $G/A_j^*$ has finite torsion-free rank. Every finitely generated soluble group of finite torsion-free rank has a locally finite normal subgroup module which the group is minimax and residually finite,  hence the finite residual of $G/A_j^*$ is torsion.  In particular, $RA_j^*/A_j^*$ is torsion and it follows that $A_j\cap R$ is trivial for each $j$. From this it follows that $R\cap A$ is trivial. It therefore follows that $R$ is torsion and since $\tau(G)=1$, we have $R=1$ as required.
\end{proof}

\subsection*{Proof of the Structure Theorem}

\begin{lemma}\label{fin-dim}
Let $Q$ be a soluble group with a locally finite normal subgroup $K$ such that $Q/K$ is minimax. Let $M$ be a $\Q Q$-module on which $K$ acts trivially and which is locally finite dimensional. Then for any cohomology class $\xi\in H^n(Q,M)$ there exists a finite dimensional submodule $L$ of $M$ such that $\xi$ lies in the image of the map $H^n(Q,L)\to H^n(Q,M)$ induced by the inclusion of $L$ in $M$.
\end{lemma}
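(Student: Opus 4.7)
The strategy is to reduce, via two applications of the Lyndon--Hochschild--Serre spectral sequence, to the case of a polycyclic group, whose cohomology commutes with filtered colimits of modules.

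First I would eliminate $K$. Since $K$ is locally finite, the integral homology $H_i(K,\Z)$ is torsion for $i\geq 1$; since $K$ acts trivially on $M$ and $M$ is a divisible torsion-free abelian group, the universal coefficient theorem gives $H^q(K,M)=0$ for $q\geq 1$. The LHS spectral sequence for $K\triangleleft Q$ therefore collapses to a natural isomorphism $H^n(Q,M)\cong H^n(Q/K,M)$.

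Next I would eliminate the torsion radical of $\bar Q:=Q/K$. Let $\bar T$ be the maximal locally finite normal subgroup of $\bar Q$; it is \v{C}ernikov, and its maximal divisible subgroup $\bar D$ is a finite direct sum of Pr\"ufer groups with $\bar T/\bar D$ finite. Any rational representation of a Pr\"ufer group $C_{p^\infty}$ on a finite-dimensional space is trivial: finite-order elements of $\GL_n(\Q)$ have order bounded in terms of $n$ (Minkowski), whereas $C_{p^\infty}$ admits no nontrivial finite quotient. Local finite dimensionality of $M$ then forces $\bar D$ to act trivially on $M$. Now the LHS sequence for $\bar D\triangleleft\bar T$ --- combining the UCT vanishing above for the locally finite $\bar D$ with Maschke's theorem for the finite quotient $\bar T/\bar D$ --- yields $H^q(\bar T,M)=0$ for $q\geq 1$. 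A further LHS for $\bar T\triangleleft\bar Q$ then gives $H^n(\bar Q,M)\cong H^n(\bar P,M^{\bar T})$, where $\bar P:=\bar Q/\bar T$ is torsion-free soluble minimax, hence polycyclic.

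Polycyclic groups are of type $\FP_\infty$, so $H^n(\bar P,-)$ commutes with filtered colimits. The $\Q\bar P$-module $M^{\bar T}$ is locally finite dimensional, so it is the filtered colimit of its finite-dimensional submodules $L'_\alpha$, and $\xi$ corresponds to a class that lifts to some $\xi_\alpha\in H^n(\bar P,L'_\alpha)$. Let $L\subseteq M$ be the $\Q Q$-submodule generated by $L'_\alpha$; by local finite dimensionality $L$ is finite dimensional, and the inclusion $L'_\alpha\subseteq L^{\bar T}$ combined with naturality of the two LHS identifications shows that $\xi$ lies in the image of $H^n(Q,L)\ra H^n(Q,M)$. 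The delicate point is the vanishing of $H^q(\bar T,M)$: it relies essentially on the locally-finite-dimensional hypothesis (to trivialise the action of the Pr\"ufer summands) and on the \v{C}ernikov structure of $\bar T$ (to separate the divisible part from a finite quotient), without which $\lim^1$-obstructions appearing in the cohomology of Pr\"ufer groups would not automatically vanish.
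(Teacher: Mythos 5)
Your first reduction (eliminating $K$ via the LHS spectral sequence, using that $M$ is a $\Q$-vector space so that $H^q(K,M)=0$ for $q\geq1$) matches the paper's approach. After that the two arguments diverge: the paper immediately cites \cite[Proposition~4]{K84}, which asserts that for a soluble minimax group $\bar Q$ the natural map $\lim_\rightarrow H^p(\bar Q,M_j)\to H^p(\bar Q,M)$ is an isomorphism whenever $(M_j)$ is an exhaustion of $M$ by finite dimensional submodules --- in other words, precisely the $\FP_\infty$-over-$\Q$ property of soluble minimax groups. You instead try to push further, eliminating the torsion radical $\bar T$ of $\bar Q$ in order to land on a group where the colimit-commutation is elementary.

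The problem is the step where you land: the assertion that $\bar P=\bar Q/\bar T$ is ``torsion-free soluble minimax, hence polycyclic'' is false on two counts. First, $\bar P$ need not be torsion-free: the infinite dihedral group $D_\infty$ is minimax with trivial torsion radical, so it arises as such a $\bar P$ and has $2$-torsion. Second, and more seriously, a torsion-free soluble minimax group need not be polycyclic --- already $\Z[\tfrac12]$ is a torsion-free abelian minimax group that fails Max. So the invocation of ``polycyclic groups are $\FP_\infty$'' does not apply. The fact you actually need is that soluble minimax groups are $\FP_\infty$ over $\Q$, so that $H^n(\bar P,-)$ commutes with filtered colimits of $\Q\bar P$-modules. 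But that is essentially the content of \cite[Proposition~4]{K84}, which the paper invokes directly for $\bar Q$ itself. Once you grant that result, the extra reduction through $\bar T$ (the Minkowski argument to trivialize the Pr\"ufer action, the second spectral sequence) is unnecessary work; and without it, the step from ``minimax'' to ``$\FP_\infty$ over $\Q$'' is a genuine gap that cannot be filled by appealing to polycyclicity.

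Everything else in your argument is sound: the Minkowski bound does force the divisible part of the \v{C}ernikov group to act trivially on any locally finite dimensional $\Q$-module, the two collapsing spectral sequences are computed correctly, and the final bookkeeping with $L'_\alpha\subseteq L^{\bar T}$ and naturality is fine. The fix is simply to replace the false ``hence polycyclic'' by the correct (but nontrivial) statement that soluble minimax groups are $\FP_\infty$ over $\Q$, i.e.\ to cite \cite{K84} or an equivalent source --- at which point you may as well apply it at the level of $\bar Q$, as the paper does.
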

This is minor extension of Proposition $4$ in \cite{K84}. Example 3.2 below shows that the assumption that $K$ acts trivially on $M$ cannot be dropped.
\begin{proof}
Let $(M_j)_{j\in \N}$ be an ascending chain of finite-dimensional submodules that exhaust $M$. Triviality of the $K$-action implies that $M$ and the $M_j$'s are $\Q Q/K$-modules, and that 
\begin{align*}
H^q(K, M_j) = \begin{cases}
M_j \text{ if } q = 0, \\
0 \text{ otherwise,}
\end{cases} \text{ and } H^q(K, M) = \begin{cases}
M \text{ if } q = 0, \\
0 \text{ otherwise.}
\end{cases}
\end{align*}

Therefore the spectral sequence
\[ \lim_\rightarrow H^p(Q/K, H^q(K, M_j)) \Rightarrow \lim_\rightarrow H^{p+q}(Q, M_j)\] collapses when $q > 0$, and takes value $ \lim\limits_\rightarrow H^p(Q/K, M_j)$ when $q=0$.
We also have the following spectral sequence
\[ H^p(Q/K, H^q(K, M))  \Rightarrow  H^{p+q}(Q, M),\]
which collapses for $q > 0$ and takes value $H^p(Q/K, M)$ when $q = 0$.
In addition, there is a natural map from the first of these to the second. As a consequence, it is sufficient to prove that this natural map
\[ \lim_\rightarrow H^p(Q/K, M_j) \rightarrow  H^p(Q/K, M) \] is an isomorphism for all $p$.  As $Q/K$ is minimax and $M_j$ is finite dimensional, this follows from \cite[proposition $4$]{K84}.
\end{proof}

\begin{example}
Let $K = \F_2[t]$. For every natural number $j$, let $M_j$ be a finite dimensional $\Q K$-module on which 
\[ \bigoplus_{l \leqslant j} \F_2 t^j \]
acts trivially and such that $M_j^K =0$.  
As an example, one can take $\Q$ with the following $K$-action: for every $i$ in $\Z$ and every $x$ in $\Q$,
\[ t^i.x = \begin{cases}
- x \text{ provided } i = j + 1, \\
x \text{ otherwise.}
\end{cases} \]
We have the following exact sequence
\[ \bigoplus_{j\in\N} M_j \mono \prod_{j\in\N} M_j \epi X \]
where $X$ denotes the quotient of the product of the $M_j's$ by their direct sum. By construction, $K$ acts trivially on $X$. The long-exact sequence of cohomology
\[ \left(\bigoplus_{j\in\N} M_j\right)^K \mono \left(\prod_{j\in\N} M_j\right)^K \rightarrow X^K \rightarrow H^1\left(K, \bigoplus_{j\in\N} M_j\right) \rightarrow H^1\left(K, \prod_{j\in\N} M_j\right) \]
has many simplifications. First, note that the two left terms are trivial, and the third one is actually $X$. Moreover, by \cite{B},
\[ H^1(K, \prod M_j) = \prod H^1(K, M_j) =0\]
because each term of the product is zero.
Therefore, $H^1(K, \oplus M_j)$ is isomorphic to $X$, whereas the direct sum $\bigoplus_j H^1(K, M_j)$ is trivial. 
\end{example}

Let $G$ be a group, and $M$ a $\Z G$-module. We shall say that $M$ is a \emph{constrained module} if and only if for each $g \in G$, and $m \in M, m.\Z\langle g\rangle$ has finite abelian section rank. Similarly, if $k$ is a field, a $kG$-module will be called constrained if and only if for each $g \in G$ it is locally finite-dimensional as a $k\langle g\rangle$-module. For a given group ring, the class of constrained modules is both section and extension closed. These definitions were introduced in \cite{K84} by the second author, who proved that a finitely generated soluble group with no section isomorphic to $(\Z/p\Z) \wr \Z$ is minimax.

\begin{proposition}[\cite{K2}, Lemma 3.3]\label{1985}
Let $Q$ be a finitely generated soluble group of finite torsion-free rank and let $M$ be a constrained $\Q Q$-module. Then $M$ is locally finite dimensional.
\end{proposition}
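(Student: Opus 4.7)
The strategy is to reduce to showing that every cyclic $\Q Q$-submodule $m\Q Q$ of $M$ is finite dimensional, then to induct on the Hirsch length $h(Q)$. For the base case $h(Q)=0$ the group $Q$ is finitely generated soluble and periodic, hence finite by a classical theorem, and so $m\Q Q$ is spanned by the finite $Q$-orbit of $m$.

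For the inductive step, since $h(Q^{\mathrm{ab}})\ge 1$ one can choose $t\in Q$ whose image generates an infinite cyclic quotient $Q/N$, so that $h(N)=h(Q)-1$. The constrained hypothesis supplies a finite basis $v_0=m,v_1,\dots,v_r$ of $m\Q\langle t\rangle$, and since $N\normal Q$,
\[ m\Q Q \,=\, \sum_{i=0}^{r} v_i\Q N. \]
It therefore suffices to bound $\dim_\Q v_i\Q N$ for each $i$. The group $N$ is soluble of Hirsch length $h(Q)-1$ but typically not finitely generated, so the inductive hypothesis does not apply to $N$ directly. To bridge this gap, exploit the finite generation of $Q$: there exist $g_1,\dots,g_k\in N$ such that $N=\bigcup_{s\ge 0}N_s$, where $N_s=\langle t^j g_l t^{-j} : |j|\le s,\ 1\le l\le k\rangle$ is finitely generated, soluble and of Hirsch length at most $h(Q)-1$. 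By the inductive hypothesis applied to $N_s$ and the restricted module (which remains constrained), each $v_i\Q N_s$ is finite dimensional, and $v_i\Q N$ is the ascending union of these finite dimensional subspaces.

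The main obstacle is to show that the chain $v_i\Q N_0\subseteq v_i\Q N_1\subseteq\cdots$ stabilises. The idea is to invoke the constrained hypothesis on $t$ itself: conjugation by $t$ carries $N_s$ into $N_{s+1}$, while $t$ has a finite dimensional orbit on each of the $v_i$. By relating $v_i\Q N_{s+1}$ back to $v_i\Q N_s$ via the finite dimensional data of the $t$-orbit of the $v_i$, and using the structural fact that $Q$ has a locally finite normal subgroup $\tau(Q)$ with minimax quotient (so that the ``abelian widths'' available in $N$ are controlled), one aims to extract a uniform bound on $\dim_\Q v_i\Q N_s$ that is independent of $s$, forcing the chain to terminate. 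Executing this stabilisation argument rigorously, and in particular showing that the constrained property propagates through the $t$-conjugation across the filtration $(N_s)$, is the technical heart of the proof and the step in which the soluble--finite--torsion-free-rank hypothesis on $Q$ is genuinely used.
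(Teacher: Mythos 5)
This proposition is imported by the paper directly from \cite{K2}, Lemma 3.3 --- the paper supplies no proof of its own --- so I can only assess the internal soundness of your sketch, not compare it against an argument in the text.

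Your framework (reduce to cyclic submodules, induct on Hirsch length, split off an infinite cyclic quotient $Q/N$, use constrainedness of $t$ to write $m\Q Q=\sum_i v_i\Q N$, then filter $N$ by finitely generated subgroups $N_s$) is a reasonable scaffold, and the identity $m\Q Q=\sum_i v_i\Q N$ is correct because $N\normal Q$ and $mt^j$ lies in the span of the $v_i$. Two problems, though. First, a small one: $h(Q^{\mathrm{ab}})\ge1$ does not follow from $h(Q)\ge1$ --- the infinite dihedral group has Hirsch length $1$ but finite abelianization. This is repairable: since $Q^{\mathrm{ab}}$ is finitely generated, if it is finite then $Q^{(1)}$ has finite index and one descends the derived series until the abelianization has positive rank; passage to a finite-index subgroup is harmless for constrainedness and for the conclusion (sum over coset representatives). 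You should say this rather than assert the false inequality.

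The serious problem is the one you flag yourself: the stabilization of the chain $v_i\Q N_0\subseteq v_i\Q N_1\subseteq\cdots$ is not an argument but an aspiration. Saying that ``the constrained hypothesis on $t$'' plus ``the abelian widths in $N$ are controlled'' should force a uniform bound is essentially restating the conclusion of the proposition, not deriving it. Concretely: $t$-conjugation maps $N_s$ into $N_{s+1}$, and $t$ has a finite-dimensional orbit on $m$, but nothing in what you have written turns these two facts into an inequality $\dim_\Q v_i\Q N_{s+1}\le\dim_\Q v_i\Q N_s + C$ with telescoping cancellation, let alone a uniform bound. The constrained hypothesis controls cyclic subgroups one at a time; the whole difficulty is to control the interaction of the cyclic subgroup $\langle t\rangle$ with the ever-larger finitely generated subgroups $N_s$, and that interaction is exactly what the finite-torsion-free-rank hypothesis has to be leveraged against (via the structure theorem ``(locally finite)-by-minimax'' and, in Kropholler's original treatment, careful use of the polycyclic or minimax layer of $Q$ and the Artin--Rees-type properties of the corresponding group rings). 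As it stands, the key step --- the one you correctly identify as the technical heart --- is missing, so the proposal is an outline of a plausible strategy, not a proof.
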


\begin{lemma}\label{lem-direct sum}
Let $Q$ be a group, $T$ a normal subgroup of $Q$ and $V$ a $\Q Q$-module. Assume that $V$ is completely reducible as a $\Q T$-module. Denote by $\Lambda$ the set of isomorphisms classes of simple $\Q T$-submodules of $V$ and set, for every $\lambda \in \Lambda$ and for every orbit $\sigma \in \Lambda/Q$, 
\[ V_\lambda = \sum_{\substack{S \leqslant V \\ S\in\lambda}} S \text{\qquad and \qquad}W_\sigma = \sum_{\lambda \in \sigma} V_\lambda.\]
Then $W_\sigma$ is the $\Q Q$-submodule of $V$ generated by $V_\lambda$ and
\[ V = \bigoplus_{\sigma \in \Lambda/Q} W_\sigma. \]
\end{lemma}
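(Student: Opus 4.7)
The plan is to reduce to the standard isotypic decomposition of a completely reducible module and then exploit the normality of $T$ in $Q$ to see that $Q$ permutes the $\Q T$-isotypic components in accordance with its action on $\Lambda$.

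First, since $V$ is completely reducible as a $\Q T$-module, I would invoke the standard isotypic decomposition to write $V=\bigoplus_{\lambda\in\Lambda}V_{\lambda}$, where $V_{\lambda}$ is (as defined in the statement) the sum of all simple $\Q T$-submodules of $V$ lying in the class $\lambda$. This reduces the problem to understanding how $Q$ permutes the $V_{\lambda}$'s.

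The central step is to define the $Q$-action on $\Lambda$ and show that $gV_{\lambda}=V_{g\cdot\lambda}$. Given $g\in Q$ and a simple $\Q T$-submodule $S\subset V$, the subspace $gS$ is again a $\Q T$-submodule because for $t\in T$ and $s\in S$ one has $t(gs)=g(g^{-1}tg)s$ with $g^{-1}tg\in T$ by normality; it is still simple since $s\mapsto gs$ is a $\Q$-linear bijection that intertwines the $T$-action on $gS$ with the $T$-action on $S$ twisted by the conjugation automorphism induced by $g$. The isomorphism class $[gS]$ therefore depends only on $[S]$ and $g$, giving a well-defined $Q$-action on $\Lambda$ by $g\cdot\lambda:=[gS]$ for any $S\in\lambda$. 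From the definitions one then reads off $gV_{\lambda}=V_{g\cdot\lambda}$, first by inclusion and then by applying $g^{-1}$ for the reverse inclusion.

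Having this, $W_{\sigma}=\sum_{\lambda\in\sigma}V_{\lambda}$ is stable under $Q$, so it is a $\Q Q$-submodule of $V$. It clearly contains $V_{\lambda}$ for every $\lambda\in\sigma$, and conversely any $\Q Q$-submodule containing a single $V_{\lambda}$ with $\lambda\in\sigma$ must contain $gV_{\lambda}=V_{g\cdot\lambda}$ for all $g\in Q$, hence all $V_{\mu}$ with $\mu\in\sigma$, and therefore $W_{\sigma}$ itself; this identifies $W_{\sigma}$ as the $\Q Q$-submodule generated by $V_{\lambda}$. Finally, partitioning $\Lambda$ into its $Q$-orbits and regrouping the $\Q T$-isotypic decomposition yields
\[ V=\bigoplus_{\lambda\in\Lambda}V_{\lambda}=\bigoplus_{\sigma\in\Lambda/Q}\;\bigoplus_{\lambda\in\sigma}V_{\lambda}=\bigoplus_{\sigma\in\Lambda/Q}W_{\sigma}, \]
which is the desired decomposition.

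The only genuinely non-formal step is the well-definedness of the $Q$-action on $\Lambda$, which hinges precisely on the normality of $T$ in $Q$; everything else is bookkeeping using the isotypic decomposition. I do not expect serious obstacles here, as this is essentially a group-theoretic analogue of the classical Clifford-theoretic decomposition of a module over a normal subgroup.
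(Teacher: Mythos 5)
Your proof is correct and follows essentially the same Clifford-theoretic route as the paper's, but you spell out two things the paper leaves implicit: (a) why normality of $T$ makes $Q$ permute the isotypic components (so that $W_\sigma$ is $\Q Q$-stable), and (b) that the sum over orbits is direct because it is a regrouping of the isotypic decomposition $V=\bigoplus_\lambda V_\lambda$. The paper instead observes that $W_\sigma$ is a submodule ``by construction'', that $\sum_\sigma W_\sigma=V$, and that distinct $W_\sigma,W_\gamma$ intersect trivially (the intersection, being a completely reducible $\Q T$-submodule, would contain a simple $S$ whose class lies in both orbits); strictly speaking, pairwise trivial intersection does not by itself yield a direct sum, so your explicit regrouping of the isotypic decomposition is the cleaner way to finish, and it is what makes the paper's terse argument airtight. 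In short: same idea, with your version filling in the justifications the paper compresses.
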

\begin{proof}
By construction, $W_\sigma$ is a $\Q Q$-submodule of $V$ and $\sum W_\sigma = V$. Assume that $\sigma $ and $\gamma$ are such that $W_\sigma$ and $W_\gamma$ intersect non-trivially. Then this intersection contains a simple $\Q T$-submodule $S \simeq \lambda$ for some $\lambda \in \Lambda$ and $W_\sigma = W_\gamma$.
\end{proof}

We may now proceed to the proof of the Structure theorem.

\begin{proof}[Proof of the Structure Theorem]

As $G$ belongs to the class $\mathfrak{U}$, it has a normal torsion-free abelian subgroup $A$ with infinite torsion-free rank such that the quotient $Q= G/A$ is a finitely generated soluble group of finite torsion-free rank.
The group $Q$ has a locally finite normal subgroup $T = K/A$ such that the quotient $Q/T = G/K$ belongs to $\mathfrak{M}$. Let $\xi \in H^2(Q, A)$ be the cohomology class corresponding to the extension
\[ A \mono G \epi Q.\]

Denote by $V$ the tensor product $A \otimes \Q$. Since $G$ has no section isomorphic to $\Z \wr \Z$, it follows that $V$ must be a constrained $\Q Q$-module. Then, by proposition \ref{1985}, $V$ is locally finite dimensional.
\begin{claim1}
The module $V$ is a direct sum of simple $\Q T$-modules.
\end{claim1}
To prove this claim, consider
\[ \mathcal{X} = \{ X \subset V \mid X.\Q T \simeq \bigoplus_{x\in X} x.\Q T \text{ and for each } x \text{ in } X, x.\Q T \text{ is simple } \}. \]
Zorn's lemma provides a maximal element $X$ in $\mathcal{X}$. If $X.\Q T\neq V$, choose $v\in V\setminus X.\Q T$. By Maschke's theorem, $v.\Q T$ decomposes as a direct sum of simple $\Q T$-modules, and at least one of them is not contained in $X.\Q T$. Therefore, by changing the choice of $v$ if necessary, we may assume that $v.\Q T$ is simple. Set $X' = X \cup \{v\}$. The set $X'$ belongs to $\mathcal X$ and that is a contradiction. Therefore $X.\Q T = V$.

\medskip

Lemma \ref{lem-direct sum} applies and allows to write $V$ as the direct sum of the $\Q Q$-modules  $V = \oplus_\sigma W_\sigma$ where $\sigma$ runs along the orbits of the action of $Q$ on the set $\Lambda$ of isomorphisms classes of simple $\Q T$-submodules of $V$ and 
\[ W_\sigma = \sum_{\lambda \in \sigma} V_\lambda, \text{ where } V_\lambda = \sum_{\substack{S \leqslant V \\ S\in \lambda}} S.\]
\begin{claim2}
The $\Q Q$-modules $W_\sigma$ are finite dimensional over $\Q$.
\end{claim2}

Let $\lambda \in \sigma$ and $S\in \lambda$.  Consider the $\Q Q$-module $S.\Q G$: by local finiteness, it is finite dimensional over $\Q$. Set $T_0 = C_T(S.\Q G)$. The subgroup $T_0$ is normal in $Q$ and has finite index in $T$. It follows that the module $W_\sigma$ is acted on trivially by $T_0$, hence $W_\sigma \subset V^{T_0}$. Denote by $[V, T_0Â ]$ the span of the elements $v(t-1)$ for $v \in V$ and $t \in T_0$. It intersects $V^{T_0}$ trivially as $V/[V, T_0]$ is the biggest quotient of $V$ on which $T_0$ acts trivially. Therefore, the second claim follows from
\begin{claim3}
The quotient $V/[V, T_0]$ is finite dimensional.
\end{claim3}

To prove this third claim, first note that $Q/T_0$ is minimax. Let $\hat \xi$ be the image of $\xi$ in $H^2(Q, V/[V, T_0])$.
 Lemma \ref{fin-dim} provides a finite dimensional submodule $L/[V, T_0]$ of $V/[V, T_0]$ such that $\hat \xi$ lies in the image of the map $H^2(Q, L/[V, T_0]) \rightarrow H^2(Q, V/[V, T_0])$ induced by the inclusion of $L/[V, T_0]$ in $V/[V, T_0]$. Consequently, $\hat \xi$ goes to zero in $H^2(Q, V/L)$.  As $G$ is finitely generated and the extension 
\[ A/A\cap L \mono G/L\epi Q \]
splits, we have $V = L$ and therefore $V/[V, T_0]$ is finite dimensional.
This ends the proof of the claim.

\medskip

Consequently, $W_\sigma$ is finite dimensional and $V$ is a direct sum of finite dimensional $\Q Q$-modules.
Set $A_i = V_i \cap A$, this is a normal subgroup of $G$ with finite rank. The group $A$ contains the infinite direct sum of the $A_i$'s and the corresponding quotient $A/(\oplus A_i)$ is torsion. Hence, we may replace $A$ with $\oplus A_i$. This proves $(i)-(vi)$.

\medskip

The proof of $(vii)$ is similar to the third claim: if $H$ is a finite index subgroup of $K$, it acts trivially on $C_A(H)$ and similarly, $V/[V, H]$ is finite dimensional.

\medskip

To prove $(viii)$, fix $r \in \N$ and set $J_r:=\{j;\ d_j:=\toprk_Q(A_j\otimes\Q) \leq r\}$.
For every $j \in J_r$, there is a maximal proper $\Q Q$-submodule $W_j$ of $A_j\otimes
\Q$ such that $\bar A_j = A_j/(A_j \cap W_j)$ has rank less or equal to $r$.

Set

\[ W = \bigoplus_{j\in J_r} W_j,\qquad B = \bigoplus_{d_j \leq r} \bar A_j \text{\qquad and \qquad}  C = \bigoplus_{d_j > r} A_j. \]

Modulo $C\oplus W$, we obtain a quotient $\bar G$ of $G$ satisfying the extension
\[ B \mono \bar G \epi Q \] 

\begin{claim4}
There exists a subgroup $H$ of finite index in $\bar G$ such that $B \subset H$ and $H^{(1)}$ is nilpotent.
\end{claim4}

As $C_{\bar G}(\bar A_j)$ acts trivially on $\bar A_j$, $\bar G/C_{\bar G}(\bar A_j) \hookrightarrow GL_r(\Q)$ and there is a constant $C_r$ such that, for all $j\in J_r$, there exists a nilpotent-by-abelian normal subgroup $H_j$ of $\bar G$ such that $\bar A_j \subset H$, $[\bar G : H_j] \leq C_r$ and the subgroup $H_j^{(1)}$ acts nilpotently on $\bar A_j$. The intersection \[H = \bigcap_{j\in J_r} H_j\] still has finite index in $\bar G$, contains $B$ and $H^{(1)}$ acts nilpotently on $B$. Consequently, $H^{(1)}$ itself is nilpotent.

Assume that $J_r$ is infinite, then $H^{(1)}$ is a nilpotent group of infinite torsion-free rank. By Corollary \ref{cor2.3}, its abelianization $H^{(1)} /H^{(2)}$ also has infinite torsion-free rank. 

Therefore we obtain a metabelian quotient of $G$ with infinite torsion-free rank, contradicting the fact that $G$ does not admit a section isomorphic to $\Z\wr\Z$.

\end{proof}

\section{Explicit Examples}

Theorem B gives elaborate information about finitely generated soluble groups that have no $\Z\wr\Z$ sections that have infinite torsion-free rank. Here we give a general recipe for such groups. In fact examples can be found in early literature: their existence is made explicit in \cite{K2}. Examples are also present in more recent literature, for example in the work of Brieussel \cite{Brieussel} and Brieussel--Zhang \cite{BZ}, where their role is to provide examples of exotic analytic behaviour. 

%

\begin{definition*} We shall say that a group $Q$ is of \emph{lamplighter type} if it has all the following properties:
\begin{enumerate}
\item $Q$ is finitely generated.
\item $Q$ is  residually finite.
\item $Q$ has a normal locally finite subgroup $B$ and an element $t$ of infinite order such that $$Q=\bigcup_{i\in\Z}Bt^i.$$
\item $Q$ is not finite-by-cyclic.
\end{enumerate}
\end{definition*}

 Groups satisfying (i),  (iii) and (iv) do not necessarily satisfy (ii). For example, the (standard restircted) wreath product $S\wr\Z$ is never residually finite if $S$ is a (non-abelian) finite simple group although it has all the other properties of group of lamplighter type. The following fact is noteworthy and easily seen directly. We remark that the lemma also has a second proof based on our examples.

\begin{lemma}\label{lemmawith2proofs}
If a finitely generated group $Q$ satisfies property (iii) above then it is finite-by-cyclic if and only if it is finitely presented.
\end{lemma}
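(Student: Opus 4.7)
The easier direction is immediate: any extension of a finite group by a cyclic group is finitely presented, so ``finite-by-cyclic implies finitely presented'' requires no work.

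For the converse, I plan to show that if $Q$ satisfies (iii) and is finitely presented, then the locally finite normal subgroup $B$ must be finite (then $B$ itself serves as the required finite normal subgroup with cyclic quotient). Since $t$ has infinite order and every element of $B$ has finite order, no nontrivial power of $t$ lies in $B$; hence $Q/B \cong \Z$, generated by the image of $t$. The sequence
\[ 1 \to B \to Q \to \Z \to 1 \]
splits via $n \mapsto t^n$, so $Q = B \rtimes \langle t\rangle$.

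The heart of the argument is a chain-stabilization step. Choose a finite generating set $\{t, b_1, \ldots, b_k\}$ of $Q$ with $b_j \in B$, and set
\[ B_n := \langle t^i b_j t^{-i} : |i| \le n,\ 1 \le j \le k\rangle. \]
By local finiteness each $B_n$ is finite; the chain is ascending with $\bigcup_n B_n = B$. If $B_n = B_{n+1}$ for some $n$, then $tB_n t^{-1} \subseteq B_{n+1} = B_n$ makes $B_n$ both $t$-invariant and (trivially) closed under conjugation by $B$, hence normal in $Q$; since $B_n$ contains the normal generators $b_j$ of $B$, we conclude $B = B_n$ is finite. Thus it suffices to show the chain stabilizes.

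To force stabilization, I plan to invoke the following fact (a consequence of the Bieri--Neumann--Strebel theory of finitely presented groups with cyclic quotient): a finitely presented group whose kernel of a surjection to $\Z$ is locally finite must have finite kernel. The main obstacle will be to make this step rigorous in a self-contained way. The natural approach is via the Reidemeister--Schreier presentation of $B$ associated to a finite presentation $\langle t, b_1, \ldots, b_k \mid r_1, \ldots, r_m\rangle$ of $Q$: each $r_i$ has $t$-weight zero and so can be rewritten as a word of bounded support in the generators $x_{j,\ell} := t^\ell b_j t^{-\ell}$, yielding a presentation of $B$ whose relations are the $t$-translates of the $r_i$. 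I expect that the ``bounded support'' of these shifted relations, combined with local finiteness of $B$, forces $x_{j, \ell}$ for $|\ell|$ sufficiently large to lie in $B_n$ for the bounded $n$ appearing in the relators; this will force the chain to stabilize and complete the proof.
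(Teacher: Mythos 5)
Your reduction is correct and is essentially the same as the paper's: $Q/B\cong\Z$ because $t$ has infinite order and $B$ is torsion, the extension splits as $Q=B\rtimes\langle t\rangle$, the chain $(B_n)$ exhausts $B$, and stabilization of the chain is equivalent to finiteness of $B$. The paper's proof then simply cites Bieri--Strebel \cite[Theorem A]{BS}: a finitely presented group containing no free subgroup of rank $2$ which surjects onto $\Z$ is an ascending HNN extension with finitely generated base contained in the kernel. Since $Q$ is (locally finite)-by-cyclic it has no rank-$2$ free subgroup, the base $H$ is a finitely generated subgroup of the locally finite group $B$ and hence finite, and the ascending monomorphism $H\to H$ is then an automorphism, so $B=H$ is finite. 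If you invoke this result as a black box, as you initially propose, your argument is complete and coincides with the paper's.

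The problem is the Reidemeister--Schreier ``self-contained'' sketch you append. The rewriting does produce a presentation of $B$ with generators $x_{j,\ell}$ and relators the $\Z$-translates of finitely many words supported in some interval $[-N,N]$. But the final claim --- that this bounded support together with local finiteness forces $x_{j,\ell}\in B_N$ for all $\ell$, stabilizing the chain --- is not an elementary consequence of the rewriting; it is exactly the content of Bieri--Strebel. Nothing in the shape of the shift-finite presentation alone lets you solve for $x_{j,N+1}$ in terms of lower-indexed generators. One genuinely needs the structural alternative (finitely generated kernel versus strictly ascending HNN versus free subgroup of rank $2$) and then uses local finiteness to exclude the second and third possibilities; establishing that alternative is the hard part of the theorem. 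Your ``I expect that\ldots'' is therefore a real gap. Either cite \cite[Theorem A]{BS} directly, as the paper does, or supply an actual proof of the stabilization step.
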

\begin{proof}[First proof]
Suppose  that $Q$ satisfies property (iii) with $B$ and $t$ as witnesses. If $Q$ is finitely presented then it is an HNN extension over a finite subgroup of $B$ by \cite[Theorem A]{BS} and the only possibility is that $B$ itself is finite and $Q$ is finite-by-cyclic. Conversely, all finite-by-cyclic groups are finitely presented.
\end{proof}

\begin{lemma}\label{setup}
Let $Q=\bigcup_{i\in\Z}Bt^i$ be a lamplighter-type group.
Then there is a sequence $(F_i,\ \Omega_i)$, ($i\ge0$) with the following properties.
\begin{enumerate}
\item The sequence
$$F_0<F_1<F_2<\dots$$
is an ascending chain of finite subgroups of $B$ such that $B=\bigcup_iF_i$.
\item each $\Omega_i$ is a finite $Q$-set that
\begin{itemize}
\item contains a $t$-fixed point,
\item is transitive as an $F_j$-set for $j>i$, and
\item is intransitive as an $F_j$-set for $j\le i$.
\end{itemize}
\end{enumerate}
\end{lemma}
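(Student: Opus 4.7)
My plan is to construct $(F_i,\Omega_i)$ by a joint recursion, using residual finiteness of $Q$ to manufacture each $\Omega_i$ and local finiteness of $B$ to enlarge $F_i$. The key reduction is that any finite $Q$-set on which some subgroup $F \le B$ acts transitively must itself be $Q$-transitive, because $F$-orbits always refine $Q$-orbits. Hence I will take $\Omega_i = Q/H_i$ with $t \in H_i$, so that the coset $H_i$ itself is a $t$-fixed point. Since $t$ has infinite order and $B$ is torsion, $\langle t\rangle \cap B = \{1\}$, and $H_i$ decomposes as $C_i\langle t\rangle$ where $C_i := H_i \cap B$ is a $t$-invariant finite-index subgroup of $B$; transitivity of $F_j$ on $\Omega_i$ then reduces to the combinatorial condition $F_j\,C_i = B$, and $|\Omega_i| = [B:C_i]$.

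Granted this reduction, the recursion runs as follows. Enumerate $B = \{b_1, b_2, \ldots\}$, which is possible since $Q$ is finitely generated; note that $B$ is infinite because $Q$ is not finite-by-cyclic. Set $F_0 := \{1\}$. Given $F_i$, pick any $b \in B \setminus F_i$ and apply residual finiteness of $Q$ to the finite non-identity set $F_i \cup \{f^{-1}b : f \in F_i\}$ to find a finite-index normal subgroup $N \triangleleft Q$ disjoint from it. Setting $C_i := N \cap B$ yields a $Q$-invariant (hence $t$-invariant) finite-index subgroup of $B$ with $F_i \cap C_i = \{1\}$ and $b \notin F_i C_i$, so in particular $F_iC_i \ne B$. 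Define $\Omega_i := Q/(C_i\langle t\rangle)$. Then let $F_{i+1}$ be the subgroup of $B$ generated by $F_i$, the element $b_{i+1}$, and a finite transversal for $C_i$ in $B$; this is finite by local finiteness of $B$ and strictly larger than $F_i$ because $F_i$ fails to meet some $C_i$-coset.

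The verification is then mostly bookkeeping. For $j > i$, $F_j \supseteq F_{i+1}$ forces $F_j C_i \supseteq F_{i+1}C_i = B$, giving transitivity of $F_j$ on $\Omega_i$; for $j \le i$, $F_j \subseteq F_i$ gives $F_j C_i \subseteq F_iC_i \subsetneq B$, giving intransitivity; and $B = \bigcup_i F_i$ follows from the enumeration. The only substantive step is the residual-finiteness invocation that produces $C_i$ with the dual property $F_i\cap C_i=\{1\}$ and $F_iC_i\ne B$ simultaneously; this is where the hypothesis (ii) in the definition of lamplighter-type is used, and it is the main (mild) obstacle, everything else being combinatorial bookkeeping.
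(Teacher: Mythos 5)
Your proof is correct and follows essentially the same approach as the paper's: residual finiteness of $Q$ is used to produce finite-index normal subgroups $N$ with $(B\cap N)F_i\neq B$, and $\Omega_i$ is taken to be the resulting finite coset space (your $Q/(C_i\langle t\rangle)$ is the same $Q$-set as the paper's $B/(B\cap N_i)$, described via a different basepoint). The only organizational difference is that you build the pairs $(F_i,\Omega_i)$ by a joint recursion, enlarging $F_{i+1}$ to contain a transversal of $C_i$, whereas the paper fixes the chain $(F_i)$ in advance and passes to a subsequence to secure the transitivity condition.
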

\begin{proof}
Choose any strictly ascending chain of finite subgroups $F_i$ of finite subgroups of $B$ so that (i) holds. Using residual finiteness, we can find $N_i\normal Q$ such that $(B\cap N_i)F_i<B$. We now set $\Omega_i=B/B\cap N_i$ with $B$ acting by right multiplication and $t$ acting by conjugation. The first and third bullet points are automatically satisfied. 
The second bullet point may not hold but certainly, for any $i$, there is some $j>i$ such that $F_j$ acts transitively on $\Omega_i$ and we can simply replace the sequence $(F_i,\ \Omega_i)$ by a subsequence to ensure this is achieved with $j=i+1$.
\end{proof}

Now let $Q=\bigcup_{i\in\Z}Bt^i$ and $(F_i,\Omega_i)$ be as in Lemma \ref{setup}.
Define $M_i$ to be the kernel of the augmentation map $\Z\Omega_i\to\Z$ (given by $\omega\mapsto1$ for $\omega\in\Omega_i$).
Let $*_i$ denote a $t$-fixed point in $\Omega_i$. Let $S$ be a finite subset of $B$ such that $Q$ is generated by $\{t\}\cup S$.

Let $W_i$ and $X_i$ be distinct $F_i$-orbits in $\Omega_i$, one of which contains $*_i$. Set
$$\mathbf w_i:=\sum_{\omega\in W_i}\omega$$
$$\mathbf x_i:=\sum_{\omega\in X_i}\omega$$
$$\xi_i:=|X_i|\mathbf w_i-|W_i|\mathbf x_i.$$

We now have, for all $i$,
$$\xi_i\in M_i^{F_i}\eqno{\dagger}$$
and
$$\xi_i\notin\Z\Omega_i(t-1).\eqno{\ddagger}$$

Let $\mathbf\xi$ denote the element $(\xi_0,\xi_1,\dots)$ of $\overline M:=\prod_iM_i$
Recall that the semidirect product $Q\ltimes \overline M$ consists of ordered pairs $(q,\eta)$ where $q\in Q$, $\eta\in\overline M$ and the group multiplication is given by
$$(q,\eta)(q',\eta')=(qq'.\eta q'+\eta').$$
Inside this semidirect product
let $G$ be the subgroup generated by $\{\mathbf t\}\cup\mathbf S$ where
$$\mathbf t:=(t,\xi)$$
$$\mathbf S:=\{(s,0);s\in S\}.$$

\subsection*{Claim 1} Any word in $\{\mathbf t\}\cup\mathbf S$ that has exponent sum zero in $\mathbf t$ yields an element in the subgroup $B\ltimes M$ where $M=\bigoplus_iM_i$. 

\subsection*{Claim 2} $G$ has infinite torsion-free rank.

Claim 1 follows from $\dagger$ and claim 2 follows from $\ddagger$. We leave the details to the interested reader. The result is that we now have a finitely generated subgroup of $Q\ltimes\overline M$ which has derived length 3, has infinite torsion-free rank, and which has no $\Z\wr\Z$ sections.

\begin{proof}[Second proof of Lemma \ref{lemmawith2proofs} in case $Q$ is residually finite]
We have constructed a group $G$ that fits into a short exact sequence $$A\mono G\epi Q$$ where $Q$ is a group of lamplighter type by choosing a finitely generated subgroup of the semidirect product $Q\ltimes \overline M$. The image of the generating set $\{\mathbf t\}\cup\mathbf S$ in $Q$ generates $Q$. Moreover, the subgroup $A$ lies in $M$. In this situation, $Q$ cannot be finitely presented because $G$ is finitely generated and $A$ is plainly not finitely generated as normal subgroup.
\end{proof}

\begin{remark}
It is not clear just how much more general our concept of \emph{group of lamplighter type} is when compared to the standard lamplighter groups $\Z/p\Z\wr\Z$. However, by using the above construction while replacing the modules $M_i$ by the kernels of augmentation maps $\Z/p_i\Z[\Omega_i]\to\Z/p_i\Z$ for a sequence of non-zero integers $p_i$ we can build groups which are still of lamplighter type but clearly not of the classical form. It is perhaps worth remarking that groups $B$ that are simultaneously locally finite and residucally finite have strong structural restrictions: a just-infinite such group is constructed in \cite{BGS} where the authors also point out structural restrictions on such groups when they have finite exponent. Our construction yields new groups of lamplighter type where the base $B$ can have finite or infinite exponent.
\end{remark}

\begin{remark}
The group $G$ should be compared with the examples constructed in \cite{Brieussel} (and generalized in \cite{BZ}) to provide various behaviour of several characteristics associated to random walks on groups. The examples therein are diagonal products of finite lamplighter groups $D_l \wr\Z/m\Z$, where $D_l= \langle a, b \mid a^2=b^2=(ab)^l\rangle$ denotes the dihedral group of size $2l$. Just like the group $G$, they are $3$-step soluble groups. Brieussel constructs them as extensions of metabelian by abelian while our examples are constructed as abelian by metabelian. The end results are essentially the same in spirit.

More precisely, for $k \geq 0$,  consider the groups $D_l \wr\Z/m\Z$ with generating set $(+1, \mathbb{1}), (0, a\delta_0)$ and $(0, b\delta_{k})$, where $\mathbb 1$ is the identically neutral function and, for any $g\in D_l, \delta_g$ is the function taking value $1$ at $g$ and neutral elsewhere.

Let $(k_s), (l_s)$ and $(m_s)$ be three sequences of integers. The value $\infty$ is allowed for $(l_s)$ and $(m_s)$. In \cite{Brieussel}, Brieussel considered the infinite diagonal product $\Delta$ of the groups  $\Z/m_s\Z \wr D_{l_s}$  which is the subgroup of $\prod_s \Z/m_s\Z \wr D_{l_s}$ generated by the sequences $\big((+1, \mathbb 1)\big), \big((0, a_s\delta_0)\big)$ and $ \big((0, b_s\delta_{k_s})\big)$. The projection onto $\Z$ of an element of $\Delta$ does not depend on $s$, a feature shared with the group $G$.
\end{remark}


\end{document}